\newtheorem{all}{Proposition}
\newtheorem{lem}[all]{Lemma}
\newtheorem{kov}[all]{Corollary}
\let\oo\H
\numberwithin{equation}{section}
\def\J{\mathbf{J}}
\def\i{\mathbf{i}}
\def\j {\mathbf{j}}
\def\V{\mathcal{V}}
\def\v{\mathbf{v}}
\def\h{\mathbf{h}}      
\def\X{{\widetilde{X}}}
\def\Y{{\widetilde{Y}}}     
\def\Z{{\widetilde{Z}}}    
\def\L{\mathcal{L}_{X^{\mathsf c}}}         
\def\La{\widetilde{\mathcal{L}}_{X^{\mathsf c}}}
\def\l{\mathcal{L}}
\def\la{\widetilde{\mathcal{L}}}
\def\H{\mathcal{H}}
\def\R{\mathbf{R}}
\newcommand{\ala}[1]{\underline{#1}}
\newcommand{\kal}[1]{\widehat{#1}}
\newcommand{\vm}{\mathfrak{X}}
\newcommand{\vl}{^\mathsf{v}}
\newcommand{\tl}{^\mathsf{c}}
\newcommand{\hl}{^\mathsf{h}}
\newcommand{\splt}{\overset\circ{T}}
\newcommand{\splp}{\overset\circ{\pi}}
\newcommand{\Sec}{\mathrm{Sec}}
\newcommand{\tenz}{\EuScript{T}}
\newcommand{\kr}{\mathrm{Ker}}
\newcommand{\im}{\mathrm{Im}}
\newcommand{\dvr}{\mathop{\mathrm{div}}}
\newcommand{\inv}{^{-1}}
\newcommand\ujsor{      
\par\noindent}
\newcommand{\eqnbetu}[2]{ 
\addtocounter{equation}1
\xdef #1{\arabic{section}.\arabic{equation}}
\tag{#1#2}}
\newcommand{\rel}[3]{       
#1\raisebox{-2pt}[0pt][-7pt]{$\begin{array}{c}\sim\\[-10pt]\scriptstyle{#2}\end{array}$} #3}
\newtheoremstyle{szoveg} 
{\topsep}                    
{\topsep}                    
{}                   
{}                           
{\bfseries}                   
{}                          
{.5em}                       
{\thmnumber{#2}\thmnote{ #3}}  
\theoremstyle{szoveg}
\newtheorem{szov}{nemlatszik}[section] 
\title{Conformal vector fields on Finsler manifolds}
\author{József Szilasi  \qquad  Anna Tóth}
\date{}
\begin{document}
{\setlength\arraycolsep{.13889em}
 \maketitle
\begin{abstract}
Applying concepts and tools from classical tangent bundle geometry and using the apparatus of the calculus along the tangent bundle
projection (`pull-back formalism'), first we enrich the known lists of the characterizations of affine vector fields on a spray manifold and
conformal vector fields on a Finsler manifold. Second, we deduce consequences on vector fields on the underlying manifold of a Finsler
structure having one or two of the mentioned geometric properties.
\end{abstract}

\noindent \textbf{Mathematics Subject Classification} (\textbf{2010}): 53C60, 53A30 \\

\noindent \textbf{Keywords:} spray manifold, Finsler manifold, projective vector field, affine vector field, conformal vector field.

\section*{Introduction}
The theory of `geometrical' -- projective, affine, conformal, isometric -- vector fields on a Finsler manifold has a vast literature,
mainly from the period dominated technically by the classical tensor calculus, visually, `the debauch of indices'. Chapter VIII of K.~Yano's
book `The theory of Lie derivatives and its applications' presents a survey of the main achievements from the beginning of the 20th century
to 1957. A good overview of the developments of the next decades can be found in R.~B.~Misra's paper \cite{misra}, written in 1981,
revised and updated in 1993. It is important to note that in a 2-part paper, see \cite{matsumoto1},\cite {matsumoto2}, M.~Matsumoto clarified and improved some
results of Yano in the framework of his theory of Finsler connections.

From the (relatively) modern, but partly tensor calculus based literature the works of H.~Akbar-Zadeh
\cite{az1},\cite{az2}, J.~Grifone \cite{grifon2},\cite{grifon3} and R.~L.~Lovas \cite{lovas} are worth mentioning. Grifone applies systematically the
\mbox{`$\tau_{TM}\colon TTM \to TM$ formalism'}, combining with the Frölicher--Nijenhuis calculus of vector-valued forms; Lovas formulates and proves
his results in terms of the `pull-back formalism $\splp\colon \overset\circ TM \times_M TM \to \overset\circ TM$'. Our paper is a continuation of
both Grifone's and Lovas's works. Although we are going to develop the greater part of the theory in terms of the pull-back bundle,
the concepts and techniques of the tangent bundle geometry, including the vertical calculus on $TM$, also play an eminent role in our
considerations. To make the paper more readable, in section 1 we summarize in a coherent way the various concepts and tools which
will be indispensable in the following.

We apply two types of a Lie derivative operator: beside the classical Lie derivative operator
$\l_\xi$ on $TM$ ($\xi \in \vm(TM)$) we need a further operator, denoted by $\la_\xi$, which acts on the tensor algebra of the
$C^{\infty}(TM)$-module of the sections of the vector bundle $\pi \colon TM \times_M TM \to TM$ (\mbox{or of the bundle $\splp$}).
To assure
the validity of the crucial identity $[\la_\xi,\la_\eta]=\la_{[\xi,\eta]}$ in case of the `new' operator, we are forced to differentiate with
respect to \emph{projectable} vector fields on $TM$. In section 2 some basic properties of the operator $\la_\xi$ are established.

The affine and projective properties of a Finsler manifold depend only on its canonical spray, so it is natural to examine affine and
projective vector fields in the (virtual) generality of spray manifolds. A vector field $X$ on a manifold $M$ is said to be an affine
vector field or a Lie symmetry for a spray $S\colon TM \to TTM$ if $S$ is invariant under the flow of the complete lift $X\tl$ of $X$,
that is, if $\L S=[X\tl,S]=0$. In Lovas's paper \cite{lovas} various equivalents of this property are established. In section 3 we enrich his list
with some new items, which will be technically useful in the next section.

By a conformal vector field on a Finsler manifold $(M,F)$ we mean a vector field $X$ on $M$ satisfying
\[
 \La g=\varphi\, g,
\]
where $g$ is the metrical tensor of the Finsler manifold (the vertical Hessian of the energy function $E=\frac{1}{2}F^2$)
and $\varphi$ is a function, defined and continuous on $TM$, smooth on the deleted bundle $\overset\circ TM$.
It turns out at once that $\varphi$ has
to be fibrewise constant, i.e., of the form $\varphi=f\circ \tau$, where $f$ is a smooth function on $M$ and $\tau$ is the tangent bundle projection.
Homothetic and isometric (or Killing) vector fields are the particular cases for which $\varphi$ is a constant function, resp.\ identically zero.
In section 4 we present further characterizations of conformal vector fields on a Finsler manifold (Proposition \ref{conformal}), one of them has already
been proposed by Grifone in \cite{grifon3}. We show that if a vector field $X\in \vm(M)$ is both affine and conformal on a Finsler manifold
$(M,F)$, then $X\tl$ is a conformal vector field for the Sasaki extension of the metric tensor of $(M,F)$ (Proposition \ref{confsasaki}).

At this stage, the following `expectable', but non-trivial conclusions may be deduced fairly easily:
\begin{enumerate}
\item[(a)] Homothetic vector fields on a Finsler manifold are affine vector fields (Proposition \ref{homaff}).
\item[(b)] If a vector field on a Finsler manifold is both projective and conformal, then it is a homothetic vector field (Proposition \ref{projconf}).
\item[(c)] If a vector field preserves the Dazord volume form of a Finsler manifold and it is also projective, then it is an affine vector
field \mbox{(Proposition \ref{vol}, (i))}.
\item[(d)] If a vector field is both volume-preserving (in the above sense) and conformal, then it is a Killing field (Proposition \ref{vol}, (ii)).
\end{enumerate}
\section{Basic setup}
\begin{szov}[Generalities]\label{generalities} 
Most of our basic notations and conventions will be the same as in \cite{bacso}, see also \cite{setting}.
However, for the reader's convenience, we present here a short review on the most essential things.
\ujsor (\textbf{a})\label{manifold} By a manifold we mean a finite dimensional smooth manifold whose underlying topological space is Hausdorff,
 second countable and connected.
In what follows, $M$ will be an $n$-dimensional manifold, where \mbox{$n\geq2$}. Let \mbox{$k\in\mathbb N\cup\{\infty\}$}. We denote by $C^k(M)$
the set of $k$-times continuously differentiable real-valued functions on $M$, with the convention that $C^0(M)$ is the set of the continuous functions on $M$.
In particular, $C^\infty(M)$ is the real algebra of smooth functions on $M$.
\ujsor (\textbf{b}) The tangent space of $M$ at a point \mbox{$p\in M$} is denoted by $T_pM$; \mbox{$TM:=\bigcup_{p\in M}T_pM$}.
The tangent bundle of $M$ is the triplet $(TM,\tau,M)$,
where the tangent bundle projection $\tau$ is defined by \mbox{$\tau(v):=p$} if \mbox{$v\in T_pM$}. Instead of $(TM,\tau,M)$ we usually write
 \mbox{$\tau\colon TM\rightarrow M$} or simply $\tau$.
Similarly, the tangent bundle of $TM$ is $(TTM, \tau_{TM},TM)$ or \mbox{$\tau_{TM}\colon TTM\rightarrow TM \mbox { or } \tau_{TM}$.}
In general, we prefer to denote a bundle by the same symbol as we use for its projection.

A \emph{vector field} on $M$ is a smooth section of the tangent bundle \mbox{$\tau\colon TM\to M$}.
The vector fields on $M$ form a $C^{\infty} (M)$-module which will be denoted by
$\mathfrak{X}(M)$. The \emph{zero vector field} $o$ on $M$ is defined by
\[
 p\in M \mapsto o(p):=0_p:= \mbox{ \emph{the zero vector} in } T_pM.
\]
The \emph{deleted bundle} for $\tau$ is the fibre bundle \mbox{$\overset\circ{\tau}\colon\overset\circ TM\to M$}, where
\mbox{$\overset\circ TM:=TM\setminus o(M),\ \overset\circ\tau:=\tau\upharpoonright \overset\circ TM$}.

\ujsor (\textbf{c}) If $\varphi:M\rightarrow N$ is a smooth mapping between smooth manifolds, then we denote its derivative by $\varphi_*$,
which is a fibrewise linear smooth mapping of $TM$ into $TN$.
Two vector fields $X\in\vm(M)$ and $Y\in\vm(N)$ are \emph{$\varphi$-related} if $\varphi_*\circ X=Y\circ\varphi$; then we write $\rel X\varphi Y$.
A vector field $\xi$ on $TM$ is said to be \emph{projectable} if there exists a vector field $X$ on $M$ such that $\rel\xi\tau X$.

\ujsor (\textbf{d}) The classical graded derivations of the graded algebra \mbox{$\Omega(M):=\bigoplus_{k=0}^n\Omega^k(M)$} of the differential forms on $M$ are
\begin{center}
\begin{tabular}{l}
the Lie derivative $\l_X$ $(X\in\vm(M)$),\\
the substitution operator $i_X$ $(X\in\vm(M))$,\\
the exterior derivative $d$,
\end{tabular}
\end{center}
related by H.~Cartan's `magic' formula
\begin{equation}\label{magic}
\l_X=i_X\circ d+d\circ i_X.
\end{equation}
\end{szov}
\begin{szov}[Canonical constructions and objects]\label{Canob}\
\ujsor (\textbf{a}) By the \emph{vertical lift} of a smooth function $f$ on $M$ we mean the function
\[f\vl:=f\circ\tau\in C^\infty(TM);\]
the \emph{complete lift} of $f$ is the function $f\tl\in C^\infty(TM)$ given by
\[f\tl(v):=v(f),\ v\in TM.\]

\ujsor (\textbf{b}) A vector field $\xi$ on $TM$ is \emph{vertical} if $\rel \xi\tau o$.
The vertical vector fields form a $C^\infty(TM)$-module $\vm\vl(TM)$, which is also a subalgebra of the Lie algebra $\vm(TM)$. The \emph{Liouville vector field}
 on $TM$ is the unique vertical vector field $C\in\vm\vl(TM)$ such that
\begin{equation}\label{Cff}
Cf\tl=f\tl \textrm{ \emph{for all} } f\in C^\infty(M).
\end{equation}
The \emph{vertical lift of a vector field} $X$ on $M$ is the unique vertical vector field $X\vl\in\vm\vl(TM)$ satisfying
\begin{equation}\label{Xvfv}
X\vl f\tl=(Xf)\vl\textrm{ \emph{for all} }f\in C^\infty(M);
\end{equation}
the \emph{complete lift} $X\tl\in\vm(TM)$ of $X$ is characterized by
\begin{equation}\label{Xcfc}
X\tl f\tl=(Xf)\tl,\ f\in C^\infty(M)
\end{equation}
(see \cite{yano2}, Ch. I.3). Then we have
\begin{equation}\label{Xcfv}
X\tl f\vl=(Xf)\vl,\ f\in C^\infty(M).
\end{equation}
Both $X\vl$ and $X\tl$ are projectable: $\rel {X\vl}\tau o$, $\rel {X\tl}\tau X$. Lie brackets involving vertical and complete lifts satisfy the rules
\begin{align}\eqnbetu{\lielift}{a-c}    
&[X\vl,Y\vl]=0,\quad [X\tl,Y\vl]=[X,Y]\vl,\quad [X\tl,Y\tl]=[X,Y]\tl,\\\eqnbetu{\lieC}{a-b}
&[C,X\vl]=-X\vl,\quad [C,X\tl]=0.
\end{align}

\ujsor (\textbf{c}) Let
\begin{align*}
TM\times_MTM:&=\big\{(u,v)\in TM\times TM\big|\ \tau(u)=\tau(v)\big\},\\
\splt M\times_MTM:&=\big\{(u,v)\in \splt M\times TM\big|\ \overset\circ\tau(u)=\tau(v)\big\}.
\end{align*}
If
\[
\pi:=\mathrm{pr}_1\upharpoonright TM\times_MTM,\quad \splp:=\mathrm{pr}_1\upharpoonright \splt M\times_MTM,
\]
then both $\pi$ and $\splp$ are vector bundles over $TM$ and $\splt M$, resp., with fibres
\[
\{u\}\times T_{\tau(u)}M\cong T_{\tau(u)}M;\quad u\in TM, \textrm{ resp. } u\in\splt M.
\]
We denote by $\Sec(\pi)$ and $\Sec(\splp)$ the $C^\infty(TM)$-, resp.\ $C^\infty(\splt M)$-module of the sections of these bundles.
 A typical section in $\Sec(\pi)$ is of the form
\[
\X:v\in TM\longmapsto (v,\ala X(v))\in TM\times_MTM,
\]
where $\underline X:TM\rightarrow TM$ is a smooth mapping such that $\tau\circ\ala X=\tau$. $\ala X$ is called the \emph{principal part} of $\X$.
We have a \emph{canonical section} in $\Sec(\pi)$, denoted by $\delta$, whose principal part is the identity mapping of $TM$.
Every vector field $X$ on $M$ yields a section $\kal X$ in $\Sec(\pi)$, called a \emph{basic section}, whose principal part is $X\circ\tau$.
Locally, the $C^\infty(TM)$-module $\Sec(\pi)$ is generated by the basic sections.

We denote by $\tenz^k_l(\pi)$ the $C^\infty(TM)$-module of the type $(k,l)$ tensors over the module $\Sec(\pi)$; the meaning of $\tenz^k_l(\splp)$ is analogous.

\ujsor (\textbf{d}) We have a canonical $C^\infty(TM)$-linear injection $\i\colon\Sec(\pi)\rightarrow\vm(TM)$ given on the basic sections by
\begin{equation}\label{ix}
\i(\kal X):=X\vl,\ X\in\vm(M),
\end{equation}
and a canonical $C^\infty(TM)$-linear surjection $\j\colon \mathfrak{X}(TM)\to \Sec(\pi)$ such that
\begin{equation}\label{jx}
\j(X\vl):=0,\ \j(X\tl):=\kal X.
\end{equation}
Then $\im(\i)=\kr(\j)=\vm\vl(TM)$.
The mapping $\J:=\i\circ\j$ is said to be the \emph{vertical endomorphism} of $\vm(TM)$. It follows immediately that
\[
\im(\J)=\kr(\J)=\vm\vl(TM),\ \J^2=0.
\]
Due to their $C^\infty(TM)$-linearity, $\i$, $\j$ and $\J$ have a natural pointwise interpretation.
\end{szov}
\begin{szov}[Some vertical calculus]\
\ujsor (\textbf{a}) We define the \emph{vertical differential} $\nabla\vl F$ of a function $F\in C^\infty(TM)$ as a $1$-form in $\tenz^0_1(\pi)$ given by
\begin{equation}\label{Dvf}
\nabla\vl F(\X):=\nabla\vl_\X F:=(\i\X)F,\ \X\in\Sec(\pi).
\end{equation}

The vertical differential $\nabla\vl\Y$ of a section $\Y\in\Sec(\pi)$ is the type $(1,1)$ tensor in $\tenz^1_1(\pi)$ defined by
\begin{equation}\label{DvY}
\left\{\begin{array}{l}
\nabla\vl\Y(\X):=\nabla\vl_\X\Y:=\j[\i\X,\eta],\\
\eta\in\vm(TM),\ \j(\eta)=\Y.
\end{array}\right.
\end{equation}
(It is easy to check that $\nabla\vl_\X\Y$ does not depend on the choice of $\eta$ satisfying $\j(\eta)=\Y$.)

By the standard technique, to make sure that Leibniz's rule holds, the operators $\nabla\vl_\X$ may be extended to tensor derivations of the full tensor
algebra of $\Sec(\pi)$.
\ujsor (\textbf{b}) Next we consider the graded algebra $\Omega(TM)$ of the differential forms on $TM$, and we define an operator
\[d_\J:\Omega(TM)\longrightarrow\Omega(TM)\]
by the rules
\begin{equation}\label{dJ}
d_\J F:=dF\circ\J,\quad d_\J dF:=-d\,d_\J F;\quad F\in C^\infty(TM).
\end{equation}
Then $d_\J$ is a graded derivation of degree $1$ of $\Omega(TM)$, called the \emph{vertical differentiation on $TM$}.
 We have (and we shall need) the following important relation:
\begin{equation}\label{dJLC}
d_\J\circ\l_C-\l_C\circ d_\J=d_\J.
\end{equation}
For details, we refer to the book \cite{leon}. We mention that $\nabla\vl$ and $d_\J$, at the level of functions, are related by
\[d_\J F=\nabla\vl F\circ\j,\ F\in C^\infty(TM).\]
\ujsor (\textbf{c}) Let $K$ be a type $(1,1)$ tensor on $TM$, interpreted as an endomorphism of the $C^\infty(TM)$-module $\vm(TM)$.
It will be convenient to denote the Lie derivative $-\l_\eta K$ ($\eta\in\vm(TM)$) by $[K,\eta]$. Then, for any vector field $\xi$ on $TM$,
\[[K,\eta]\xi=[K\xi,\eta]-K[\xi,\eta].\]
We have, in particular,
\begin{align}\eqnbetu{\lieJ}{a-c}
[\J,C]=\J;\quad [\J,X\vl]=0,\ [\J,X\tl]=0\quad (X\in \vm(M)).
\end{align}
\begin{quote}
\emph{In what follows, for simplicity, we shall denote also by $\i$, $\j$ and $\J$ the restrictions of these mappings to $\Sec(\splp)$ and $\vm(\splt M)$.}
\end{quote}
\end{szov}
\begin{szov}[Ehresmann connections]\
\ujsor (\textbf{a}) By an \emph{Ehresmann connection} in $\splt M$ we mean a $C^\infty(\splt M)$-linear mapping
\[
\H:\Sec(\splp)\longrightarrow\vm(\splt M)
\]
such that
\[
\j\circ\H=1_{\Sec(\splp)}.
\]

We emphasize (cf.\ \ref{Canob}(d)) that the $C^\infty(\splt M)$-linearity of $\H$ makes it possible to interpret an Ehresmann connection as a strong bundle map
\[
\mathsf H \colon\splt M\times_MTM\longrightarrow T\splt M
\]
as follows:

For each $(u,v)\in \splt M\times_MTM$ there exists a section $\X\in\Sec(\splp)$ such that $\X(u)=(u,v)$. Let $\mathsf H_u(v):=\H(\X)(u)$.
Then $\mathsf H_u$ is well-defined and
\[
\H(\X)(u)=\mathsf H_u(\X(u))\textrm{ \emph{for all} }\X\in\Sec(\splp).
\]

Obviously, the mappings
\[
\mathsf H_u:\{u\}\times T_{\mathring \tau(u)}M\longrightarrow T_u\splt M,\ u\in \splt M
\]
are linear. Now we obtain the desired mapping $\mathsf H:\splt M\times_MTM\rightarrow T\splt M$ by setting
\[
\mathsf H\upharpoonright\{u\}\times T_{\mathring\tau(u)}M:=\mathsf H_u.
\]
\ujsor (\textbf{b}) Let $\H:\Sec(\splp)\rightarrow\vm(\splt M)$ be an Ehresmann connection in $\splt M$.
Then $\vm\hl(\splt M):=\im(\H)$ is a submodule of $\vm(\splt M)$, and we have the direct decomposition $\vm(\splt M)=\vm\vl(\splt M)\oplus\vm\hl(\splt M)$.
Vector fields on $\splt M$ belonging to $\vm\hl(\splt M)$ are called \emph{horizontal}.
Notice that they do not form, in general, a subalgebra of the Lie algebra $\vm(\splt M)$. The mappings
\begin{align*}
\h:&=\H\circ\j,\ \v:=1_{\vm(\splt M)}-\h,\\
\V:&=\i\inv\circ\v\colon\ \vm(\splt M)\longrightarrow \Sec(\splp)
\end{align*}
are called the \emph{horizontal projection}, the \emph{vertical projection} and the \emph{vertical mapping} associated to $\H$, respectively. $\h$ and $\v$ are indeed projection operators in $\vm(\splt M)$, while the mapping $\V$ has the properties
\[\V\circ\i=1_{\Sec(\splp)},\ \kr(\V)=\im(\H).\]
The \emph{horizontal lift} of a vector field $X$ on $M$ (with respect to $\H$) is
\[X\hl:=\H(\kal X)=\h(X\tl).\]
($\kal X$ and $X\tl$ are regarded here as a section in $\Sec(\splp)$ and a vector field on $\splt M$, resp.; for simplicity, we make no notational distinction.)

\ujsor (\textbf{c}) An Ehresmann connection $\H$ is said to be \emph{homogeneous} if
\[[C,X\hl]=0\textrm{ \emph{for all} } X\in\vm(M).\]
Then $\H$, as a strong bundle map of $\splt M\times_MTM$ to $T\splt M$, may be extended continuously to a mapping $TM\times_MTM\rightarrow TTM$ such that
\[
\H(0_p,v)=(o_*)_p(v)\textrm{ \emph {for all} } p\in M,\ v\in T_pM.
\]
Thus, in what follows, we shall always assume that a homogeneous Ehresmann connection is defined
on the entire $TM\times_MTM$ (or on $\Sec(\pi)$).

\ujsor (\textbf{d}) If $\H$ is an Ehresmann connection in $\splt M$, then the mapping
\[\nabla:\vm(\splt M)\times\Sec(\splp)\longrightarrow\Sec(\splp),\ (\xi,\Y)\longmapsto \nabla_\xi\Y\]
given by
\begin{subequations}        
\begin{align}\label{Dv}
\nabla_{\v\xi}\Y:&=\nabla\vl_{\V\xi}\Y\mathop{=}^{\eqref{DvY}}\j[\v\xi,\H\Y]\\\label{Dh}
\nabla_{\h\xi}\Y:&=\nabla\hl_{\j\xi}\Y:=\V[\h\xi,\i\Y]
\end{align}
\end{subequations}
is a covariant derivative operator in the vector bundle $\splp$, called the \emph{Berwald derivative} induced by $\H$.

By the \emph{tension} of $\H$ we mean the $\nabla\hl$-differential $\mathbf t:=\nabla\hl\delta$ of the canonical section. Then, for any section $\X\in\Sec(\splp)$,
\begin{equation}\label{tens}
\mathbf t(\X):=(\nabla\hl\delta)(\X):=\nabla\hl_\X\delta=\V[\H\X,C].
\end{equation}
In particular,
\[\i\mathbf t(\kal X)=[X\hl,C],\quad X\in\vm(M);\]
therefore \emph{$\H$ is homogeneous if, and only if, its tension vanishes}.

With the help of the induced Berwald derivative we define the \emph{torsion} $\mathbf T$ of an Ehresmann connection $\H$ by
\[\mathbf T(\X,\Y):=\nabla_{\H\X}\Y-\nabla_{\H\Y}\X-\j[\H\X,\H\Y];\quad \X,\Y\in\Sec(\splp).\]
Evaluating on basic sections, we obtain the more expressive formula
\[\i\mathbf T(\kal X,\kal Y)=[X\hl, Y\vl]-[Y\hl,X\vl]-[X,Y]\vl;\quad X,Y\in\vm(M).\]
\end{szov}
\section{Lie derivative along the tangent bundle projection}
Let $\xi$ be a projectable vector field on $TM$ (\ref{generalities}(c)). We define a Lie derivative operator $\la_\xi$ on the tensor
algebra of the $C^\infty(TM)$-module $\Sec(\pi)$ by the rules
\begin{subequations}
\begin{align}
 \la_\xi \varphi:&=\xi\varphi, \textrm{ if } \varphi\in C^\infty(TM); \\
 \la_\xi \Y:&=\i\inv[\xi,\i\Y], \textrm{ if } \Y\in \Sec(\pi),
\end{align}
 \end{subequations}
and by extending it to the whole tensor algebra in such a way that $\la_\xi$ satisfies the product rule of tensor derivations. Since $\xi$ is a
projectable and $\i\Y$ is a vertical vector field, it follows that the vector field $[\xi,\i\Y]$ is vertical, so $\la_\xi\Y$ is well-defined.
If $\v=\i\circ\V$ is the vertical projection associated to an Ehresmann connection $\H$ in $TM$, then
$
 \i\inv[\xi,\i\Y]=\V[\xi,\i\Y],
$
so we get the useful formula
\begin{equation}\label{liealt}
 \la_\xi\Y=\V[\xi,\i\Y].
\end{equation}
Notice, however, that the Lie derivative operator $\la_\xi$ does not depend on any Ehresmann connection in $TM$.

If, in particular, $\xi:=X\tl$ or $\xi:=X\hl$, where $X$ is a vector field on $M$, then \eqref{liealt} takes the form
\begin{align}
\la_{X\tl}\Y=\V[X\tl,\i\Y],\label{lieteljes}
\end{align}
resp.
\begin{align}
\la_{X\hl}\Y=\V[X\hl,\i\Y]\mathop{=}^{\eqref{Dh}}\nabla\hl_{\kal{X}}\Y.
\end{align}
Since $[X\tl,\i\delta]=[X\tl,C]\stackrel{\textup{(\lieC b)}}{=}0$, it follows that
\begin{equation}\label{lxdelta}
 \la_{X\tl}\delta=0.
\end{equation}
The Lie derivative of a basic section with respect to a complete lift leads essentially to the ordinary Lie derivative. Namely, for any
vector fields $X$, $Y$ on $M$ we have
\[
\La \kal{Y}\mathop{=}^{\eqref{lieteljes}}\V[X\tl,Y\vl]\mathop{=}^{\textup{(\lielift b)}}\V[X,Y]\vl=\V\circ \i \kal{[X,Y]}=
\kal{[X,Y]}=\kal{\l_X Y}.
\]
This relation indicates that our Lie derivative operator $\La$ is a natural extension of the classical Lie derivative $\l_X$ on $M$.

\begin{lem}\label{lie4}
 For any projectable vector fields $\xi$, $\eta$ on $TM$,
\begin{equation}\label{liezj}
 [\la_\xi,\la_\eta]=\la_{[\xi,\eta]}.
\end{equation}
\end{lem}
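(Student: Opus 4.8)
The plan is to verify the identity \eqref{liezj} by checking that both sides agree as tensor derivations of the tensor algebra of $\Sec(\pi)$. Since a tensor derivation is determined by its action on $C^\infty(TM)$ and on $\Sec(\pi)$ (it then extends uniquely by the Leibniz rule), it suffices to establish \eqref{liezj} on functions and on sections. On a function $\varphi \in C^\infty(TM)$ we compute directly: $[\la_\xi,\la_\eta]\varphi = \la_\xi(\eta\varphi) - \la_\eta(\xi\varphi) = \xi(\eta\varphi) - \eta(\xi\varphi) = [\xi,\eta]\varphi = \la_{[\xi,\eta]}\varphi$, using only the defining rule $\la_\xi\varphi = \xi\varphi$. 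This part is immediate.

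**The section case.** For $\Y \in \Sec(\pi)$ I would use the formula $\la_\xi\Y = \i^{-1}[\xi,\i\Y]$. First note that $\i\la_\xi\Y = [\xi,\i\Y]$, which is the key bookkeeping fact: applying $\i$ converts the operator $\la$ into an ordinary Lie bracket on $\vm(TM)$. Then
\begin{align*}
\i\big([\la_\xi,\la_\eta]\Y\big) &= \i\big(\la_\xi\la_\eta\Y\big) - \i\big(\la_\eta\la_\xi\Y\big)\\
&= [\xi,\i\la_\eta\Y] - [\eta,\i\la_\xi\Y]\\
&= [\xi,[\eta,\i\Y]] - [\eta,[\xi,\i\Y]].
\end{align*}
By the Jacobi identity in $\vm(TM)$ this equals $[[\xi,\eta],\i\Y] = \i\big(\la_{[\xi,\eta]}\Y\big)$. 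Since $\i$ is injective, $[\la_\xi,\la_\eta]\Y = \la_{[\xi,\eta]}\Y$. Here one must observe that $[\xi,\eta]$ is itself projectable (the bracket of projectable vector fields is projectable, with $\rel{[\xi,\eta]}{\tau}{[X,Y]}$ if $\rel{\xi}{\tau}{X}$ and $\rel{\eta}{\tau}{Y}$), so that $\la_{[\xi,\eta]}$ is defined; and that $\la_\eta\Y \in \Sec(\pi)$ so that $\la_\xi\la_\eta\Y$ makes sense — both facts have effectively been recorded in the setup preceding the lemma.

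**Main obstacle.** The only genuine subtlety is a consistency check rather than a computation: one must confirm that the operator $[\la_\xi,\la_\eta] - \la_{[\xi,\eta]}$, having been shown to vanish on $C^\infty(TM)$ and on $\Sec(\pi)$, actually vanishes on the whole tensor algebra. This follows because the commutator of two tensor derivations is again a tensor derivation, $\la_{[\xi,\eta]}$ is a tensor derivation by construction, hence their difference is a tensor derivation vanishing on generators (functions and one-tensors), and such a derivation is identically zero. I would state this uniqueness principle explicitly, since it is what reduces the lemma to the two elementary verifications above. No hard analysis or connection-dependent argument is needed — the proof is purely algebraic, resting on the Jacobi identity and the injectivity of $\i$.
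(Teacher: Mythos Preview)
Your proof is correct and follows essentially the same route as the paper: verify the identity on functions (trivial) and on sections via the Jacobi identity, with $\i\la_\xi\Y=[\xi,\i\Y]$ as the bridge. The paper phrases the section computation through $\V$ (using \eqref{liealt}) rather than $\i^{-1}$, but since $\i^{-1}=\V$ on vertical vector fields these are the same calculation; you are simply more explicit about the projectability of $[\xi,\eta]$ and the tensor-derivation extension, points the paper leaves implicit.
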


\begin{proof}
 Obviously, both sides of \eqref{liezj} act in the same way on smooth functions on $TM$.
If $\Y$ is a section of $\pi$, then, applying \eqref{liealt} repeatedly,
\begin{align*}
[&\la_\xi,\la_\eta]\Y=\la_\xi \V[\eta,\i\Y]-\la_\eta \V[\xi,\i\Y]=\V ([\xi,\i\V[\eta, \i\Y]]-[\eta,\i\V[\xi,\i\Y]])
\\
&=\V ([\xi,[\eta, \i\Y]]+[\eta,[\i\Y,\xi]])=-\V[\i\Y,[\xi,\eta]]=\V[[\xi,\eta], \i\Y]=\la_{[\xi,\eta]}\Y.
\end{align*}
\end{proof}

\begin{lem}
 Let $X\in\vm(M)$, $\eta\in \vm(TM)$. Then
\begin{equation}\label{felcs}
 \La \j\eta=\j \L \eta.
\end{equation}
\end{lem}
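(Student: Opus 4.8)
The plan is to verify the identity $\La\j\eta=\j\L\eta$ by checking it locally on a module of generators. Since $\Sec(\pi)$ is locally generated over $C^\infty(TM)$ by the basic sections $\kal Y$ ($Y\in\vm(M)$), and both sides of \eqref{felcs} are sections of $\pi$, it suffices to show that $(\La\j\eta)$ and $(\j\L\eta)$ agree as sections; equivalently, since $\i$ is injective, that $\i(\La\j\eta)=\i(\j\L\eta)=\J\,\L\eta$. Here I would use the characterization of a section of $\pi$ via pairing: rather than comparing the sections directly, I would apply $\i$ to both sides and compute in $\vm\vl(TM)$.

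First I would rewrite the right-hand side: $\i\j\L\eta=\J\L\eta=\J[X\tl,\eta]$, and recall from \textup{(\lieJ c)} that $[\J,X\tl]=0$, i.e. $\J[X\tl,\eta]=[X\tl,\J\eta]$ (unwinding the convention $[\J,X\tl]\eta=[\J\eta,X\tl]-\J[\eta,X\tl]$, which vanishes, so $\J[X\tl,\eta]=[X\tl,\J\eta]$). Thus $\i(\j\L\eta)=[X\tl,\J\eta]$. For the left-hand side, using the definition of $\la_{X\tl}$ on sections, $\i(\La\j\eta)=\i\,\i\inv[X\tl,\i\j\eta]=[X\tl,\J\eta]$, since $\i\j=\J$. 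Both sides equal $[X\tl,\J\eta]$, so \eqref{felcs} follows, and applying $\i\inv$ gives the stated equality in $\Sec(\pi)$.

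The key technical point is the compatibility $\J[X\tl,\eta]=[X\tl,\J\eta]$, which is exactly the content of the bracket relation $[\J,X\tl]=0$ recorded in \textup{(\lieJ c)}; everything else is just unwinding the definitions of $\la_{X\tl}$ from \eqref{liealt} (specialized as in \eqref{lieteljes}) and of $\J=\i\circ\j$ from \ref{Canob}(d). I expect the main obstacle to be purely notational: one must be careful that $X\tl$ is a genuine (globally defined) projectable vector field on $TM$, so that $\la_{X\tl}$ is defined, and that the bracket $[\J,X\tl]$ is interpreted with the correct sign convention from \ref{generalities}(d)/the vertical-calculus section. Once the sign bookkeeping is fixed, the proof is a two-line computation.
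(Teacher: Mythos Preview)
Your proof is correct and is essentially the same as the paper's: both apply $\i$ to each side, use $\i\circ\j=\J$ and the defining formula $\i\La\j\eta=[X\tl,\J\eta]$, and invoke \textup{(\lieJ c)} to commute $\J$ past the bracket with $X\tl$, concluding by injectivity of $\i$. The opening remark about checking on basic sections is unnecessary (as you yourself note), but harmless.
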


\begin{proof}
 Since
\[
 0\mathop{=}^{\textup{(\lieJ c)}}[\J,X\tl]\eta=[\J\eta,X\tl]-\J[\eta,X\tl],
\]
we find
\[
 \i\La \j\eta=[X\tl,\J\eta]=\J[X\tl,\eta]=\i(\j\L \eta),
\]
which implies \eqref{felcs}.
\end{proof}

We end this section with the definition of the Lie derivative $\la_\xi D$ of a covariant derivative
$D\colon \vm(TM)\times \Sec(\pi) \to \Sec(\pi)$: it is given by the rule
\[
 (\la_\xi D)(\eta,\Z):=\la_\xi (D_\eta \Z)-D_\eta (\la_\xi \Z)-D_{[\xi,\eta]}\Z,
\]
where $\eta \in \vm(TM)$, $\Z\in \Sec(\pi)$.

Notice finally that the theory of Lie derivatives `along the tangent bundle projection' sketched here works without any change also on the
bundle $\splp\colon \mathring{T}M\times_M TM \to \mathring{T}M$.
\section{Affine vector fields on a spray manifold}
\begin{szov}
By a \emph{spray} for $M$ we mean a $C^1$ mapping $S:TM\to TTM$, smooth on $\overset\circ TM$, such that
   \begin{align}
   &\tau_{TM}\circ S=1_{TM}; \label{S1}
 \\
    &\mathbf{J}S=C;\label{S2}
  \\
     &[C,S]=S. \label{S3}
\end{align}

Condition (\ref{S2}) is equivalent to the requirement $\tau_*\circ S=1_{TM}$, so a spray for $M$ is a section also of the secondary vector bundle
$\tau_*\colon TTM\to TM$.
In view of \eqref{S3}, a spray is a \emph{homogeneous} vector field (of class $C^1$) \emph{of degree 2}.
We say that a manifold endowed with a spray is a \emph{spray manifold}.
\end{szov}
\begin{szov} If $\H$ is a homogeneous Ehresmann connection in $TM$, then $S:=\H\circ\delta$ is a spray for $M$, called the \emph{spray associated to} $\H$.
Indeed, for any vector $w$ in $TM$, $S(w)=\H(w,w)\in T_{w}TM$, therefore $\tau_{TM}(S(w))=w$, so \eqref{S1} is valid.
Since
\[
 \J\circ S=\i\circ \j\circ\H\circ\delta=\i\circ\delta=C,
\]
condition (\ref{S2}) also holds.
To check (\ref{S3}), observe first that the vector field $[C,S]-S$ is vertical, and hence $\mathbf{h}[C,S]=\h\, S$.
However, \mbox{$\mathbf{h} S=\H\circ\j\circ\H\circ\delta=\H\circ\delta=:S$,} so we get $\mathbf{h}[C,S]=S$.
On the other hand, by the homogeneity of $\H$,
\[
 0=-\i\mathbf{t}(\delta)=-\mathbf{v}[\H\circ\delta,C]=\mathbf{v}[C,S],
\]
therefore $\mathbf{h}[C,S]=[C,S]$ and $[C,S]=S$.
Finally, the $C^1$ differentiability of $S$ can be shown using the `Observation' in 3.11 (p. 1378) of \cite{setting}.

Thus sprays exist in abundance for a manifold.
Conversely, if $S$ is a spray for $M$, then there exists a unique torsion-free homogeneous Ehresmann connection $\H$ in $TM$ such
that the  horizontal lifts with respect to $\H$ are given by
\begin{equation}\label{c-g}
 X\hl:=\H(\widehat{X})=\frac{1}{2}(X\tl+[X\vl ,S]), \quad X\in\mathfrak{X}(M).
\end{equation}
For a proof of this fundamental fact we refer to \cite{setting}, 3.3, or to the original source \cite{crampin}.
The Ehresmann connection specified by \eqref{c-g} is said to be the \emph{Ehresmann connection induced by the spray $S$}.
\end{szov}
\begin{szov} Let $(M, S)$ be a spray manifold.
We say that a vector field  $X$ on $M$ is a \emph{projective vector field} for $(M,S)$ (or for the spray $S$) if there
is a continuous function $\varphi$ on $TM$, smooth on $\overset\circ TM$, such that
\begin{equation}\label{proj}
[X\tl,S]=\varphi\, C.
\end{equation}
If, in particular, $\varphi$ is the zero function, then we say that $X$ is an \emph{affine vector field} for $(M,S)$, or a \emph{Lie symmetry} of $S$.
\end{szov}
\begin{all}\label{affine}
Suppose $(M,S)$ is a spray manifold.
Let $\H$ be the Ehresmann connection induced by $S$, and let $\nabla$ be the Berwald derivative arising from $\H$.
For a vector field $X$ on $M$, the following  conditions are equivalent:
\begin{itemize}
 \item [(i)]$X$ is a Lie symmetry of $S$;
 \item  [(ii)]$[\mathbf{h},X\tl]=0$;
 \item[(iii)] $[\mathbf{v},X\tl]=0$;
 \item [(iv)] $\La\nabla=0$;
 \item[(v)] $[X\tl,Y\hl]=[X,Y]\hl$, for any vector field $Y$ on $M$;
\item[(vi)] $[\La ,\la_{Y\hl}]=\la_{[X,Y]\hl}$, $Y\in\mathfrak{X}(M)$;
\item [(vii)] $\La\circ\mathcal{V}=\mathcal{V}\circ\mathcal{L}_{X\tl}$.
\end{itemize}
\end{all}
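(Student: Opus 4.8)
The plan is to establish a cycle of implications through (i), (v), (ii), (iii), (vii), and then attach (vi) and (iv) to it. Throughout I will use two facts that hold for \emph{every} $X\in\vm(M)$. Since $\j[X\tl,S]=\La(\j S)=\La\delta\overset{\eqref{lxdelta}}{=}0$ by \eqref{felcs}, the vector field $[X\tl,S]$ is vertical; likewise $\j[X\tl,Y\hl]=\La(\j Y\hl)=\La\kal Y=\kal{[X,Y]}$, so $\h[X\tl,Y\hl]=\H\kal{[X,Y]}=[X,Y]\hl$. Hence $[X\tl,S]=\v[X\tl,S]$, and $[X\tl,Y\hl]=[X,Y]\hl+\zeta_Y$ where $\zeta_Y:=\v[X\tl,Y\hl]$ is vertical. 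Most of the listed conditions will reduce either to $[X\tl,S]=0$ or to the requirement that $\zeta_Y=0$ for all $Y\in\vm(M)$. (Minor $C^1$-at-the-zero-section issues for $S$ are handled, as in \cite{setting}, by continuity from the deleted bundle.)

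\textbf{The cycle.} For (i)$\Rightarrow$(v): insert $Y\hl=\tfrac12\big(Y\tl+[Y\vl,S]\big)$ from \eqref{c-g}, expand $[X\tl,Y\hl]$ with the Jacobi identity and (\lielift b), (\lielift c); assuming $[X\tl,S]=0$ the extra term $[Y\vl,[X\tl,S]]$ drops out and what remains is $\tfrac12\big([X,Y]\tl+[[X,Y]\vl,S]\big)=[X,Y]\hl$. For (v)$\Rightarrow$(ii): $[\h,X\tl]=-\lie_{X\tl}\h$ is a type $(1,1)$ tensor, so it suffices to evaluate it on the local generators $Y\vl,Y\hl$ of $\vm(TM)$; $[\h,X\tl]Y\vl=-\h[Y\vl,X\tl]=\h[X,Y]\vl=0$ holds unconditionally, while $[\h,X\tl]Y\hl=\v[Y\hl,X\tl]=-\v[X\tl,Y\hl]=-\v[X,Y]\hl=0$ by (v). (ii)$\Leftrightarrow$(iii) is immediate from $\v=1_{\vm(TM)}-\h$ and $\lie_{X\tl}1_{\vm(TM)}=0$. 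For (iii)$\Leftrightarrow$(vii): unravelling $\V=\i\inv\circ\v$ and the definition of $\La$ gives $\La(\V\eta)=\i\inv[X\tl,\v\eta]$ and $\V(\L\eta)=\i\inv\v[X\tl,\eta]$ for $\eta\in\vm(TM)$, so (vii) says exactly $[X\tl,\v\eta]=\v[X\tl,\eta]$ for all $\eta$, i.e. $[\v,X\tl]=0$, i.e. (iii). Finally (iii)$\Rightarrow$(i): apply the tensor $[\v,X\tl]$ to $S$; as $\v S=0$ we get $[\v,X\tl]S=-\v[S,X\tl]=\v[X\tl,S]=[X\tl,S]$ (the last step because $[X\tl,S]$ is vertical), and (iii) forces this to vanish.

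\textbf{Attaching (vi) and (iv).} By Lemma \ref{lie4}, $[\La,\la_{Y\hl}]=\la_{[X\tl,Y\hl]}$, so (vi) is equivalent to $\la_{[X\tl,Y\hl]}=\la_{[X,Y]\hl}$, hence (using additivity of $\la_{(\cdot)}$ on projectable fields, and that $\zeta_Y$ is vertical) to $\la_{\zeta_Y}=0$ for all $Y$; since $\la_{\zeta_Y}F=\zeta_Y F$ for $F\in C^\infty(TM)$, this is precisely $\zeta_Y=0$ for all $Y$, i.e. (v). For (iv), one first checks that $\La\nabla$ is $C^\infty(TM)$-bilinear, so it suffices to evaluate it with first slot $Y\vl$ or $Y\hl$ and second slot a basic section $\kal Z$. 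Because $\nabla_{Y\vl}\kal Z=\nabla\vl_{\kal Y}\kal Z\overset{\eqref{DvY}}{=}\j[Y\vl,Z\tl]=0$, a direct expansion gives $(\La\nabla)(Y\vl,\kal Z)=0$ unconditionally. For the horizontal generator, $\i(\nabla\hl_{\kal Y}\kal Z)\overset{\eqref{Dh}}{=}\v[Y\hl,Z\vl]=[Y\hl,Z\vl]$ is vertical, and the Jacobi identity together with $[X\tl,Z\vl]=[X,Z]\vl$ and $[X\tl,Y\hl]=[X,Y]\hl+\zeta_Y$ yields, after the $\nabla\hl$-terms cancel,
\[
(\La\nabla)(Y\hl,\kal Z)=\i\inv[\zeta_Y,Z\vl].
\]
Thus (v), i.e. $\zeta_Y=0$, gives $\La\nabla=0$, which is (iv); conversely (iv) forces $[\zeta_Y,Z\vl]=0$ for all $Y,Z\in\vm(M)$, a computation in a natural chart shows this makes the components of the vertical field $\zeta_Y$ fibrewise constant (so $\zeta_Y$ is locally a vertical lift), and then the homogeneity of $\H$ — which gives $[C,Y\hl]=0$, while $[C,X\tl]=0$, hence $[C,\zeta_Y]=0$ — combined with $[C,W\vl]=-W\vl$ from (\lieC a) forces $\zeta_Y=0$, i.e. (v).

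\textbf{Expected main obstacle.} The substantive part is the pair of implications between (iv) and (v): both rest on the careful, verticality-tracking Jacobi computation of $\La\nabla$ on horizontal generators, and (iv)$\Rightarrow$(v) uses the homogeneity of the spray-induced connection in an essential way — it is exactly homogeneity that upgrades ``$\zeta_Y$ fibrewise constant'' to ``$\zeta_Y=0$''. The remaining equivalences are formal manipulations with $\i$, $\j$, $\h$, $\v$ and the Lie bracket.
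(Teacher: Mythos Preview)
Your argument is correct. Every step checks out, including the Jacobi computation that gives $(\La\nabla)(Y\hl,\kal Z)=\i\inv[\zeta_Y,Z\vl]$ (the potentially worrying extra term $\j[\zeta_Y,Z\tl]$ vanishes because $[\zeta_Y,Z\tl]$ is vertical), and the homogeneity step that kills a fibrewise-constant vertical $\zeta_Y$.

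Your route differs from the paper's in two respects. First, the paper does not prove (i)$\iff$(ii)$\iff$(iv) at all: it simply cites Lovas \cite{lovas} for these three equivalences and only supplies the short bridges (ii)$\iff$(iii), (ii)$\iff$(v), (v)$\iff$(vi), (iii)$\iff$(vii). You instead build a closed cycle (i)$\Rightarrow$(v)$\Rightarrow$(ii)$\iff$(iii)$\iff$(vii)$\Rightarrow$(i) and then attach (vi) and (iv) to (v). Second, and more substantively, your treatment of (iv) is an independent computation: you identify $(\La\nabla)(Y\hl,\kal Z)$ with $\i\inv[\zeta_Y,Z\vl]$ and then use homogeneity of the spray-induced connection to pass from ``$\zeta_Y$ fibrewise constant'' to ``$\zeta_Y=0$''. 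The paper's version (via \cite{lovas}) hides this, whereas your argument makes explicit exactly where homogeneity of $\H$ enters. A minor stylistic difference: for (ii)$\iff$(v) the paper tests $[\h,X\tl]$ on $Y\tl$ and on $\J\xi$, while you test on $Y\hl$ and $Y\vl$; both are legitimate generating families. In short, your proof is longer but fully self-contained, and your handling of (iv) adds genuine content that the paper outsources.
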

\begin{proof}
The equivalence of conditions $(i)$, $(ii)$ and $(iv)$ has already been proved in \cite {lovas}.
\\
$(ii)\iff(iii)\quad$
This is evident, since $\v=\mathbf{1}-\mathbf{h}$ ($\mathbf{1}:=1_{\mathfrak{X}(TM)}$) and $[\mathbf{1},\xi]=0$ for all $\xi\in\mathfrak{X}(TM)$.
\\
$(ii)\iff(v)\quad$ For any vector field $Y$ on $M$,
\[
[\mathbf{h},X\tl]Y\tl=[\mathbf{h}Y\tl,X\tl]-\mathbf{h}[Y\tl,X\tl]=[Y\hl,X\tl]-\mathbf{h}[Y,X]\tl=[Y\hl,X\tl]-[Y,X]\hl,
\]
so the vanishing of $[\mathbf{h},X\tl]$ implies that $[X\tl,Y\hl]=[X,Y]\hl$.
The converse is also true, since $[\mathbf{h},X\tl]$ annihilates the module of vector fields:
for any vector field $\xi$ on $TM$ we have
\[
[\mathbf{h},X\tl]\mathbf{J}\xi=[\mathbf{h} \circ \mathbf{J}(\xi),X\tl]-\mathbf{h}[\mathbf{J}\xi,X\tl]=0.
\]
$(v)\iff(vi)\quad$
This is an immediate consequence of the identity
\[
[\La,\widetilde{\mathcal{L}}_{Y\hl}]=\widetilde{\mathcal{L}}_{[X\tl,Y\hl]}
\]
(see Lemma \ref{lie4}).
\\
$(iii)\iff (vii)\quad$ For any vector field $\xi$ on $TM$,
\[
\i\La(\V\xi)=[X\tl,\v\xi],\quad  \i\V(\L\xi)=\v[X\tl,\xi],
\]
hence
$\La(\V\xi)=\V(\L\xi)$ if, and only if,
\[
0=[\mathbf{v}\xi,X\tl]- \mathbf{v}[\xi,X\tl]=[\mathbf{v}, X\tl]\xi.
\]
\end{proof}
\section{Conformal vector fields on a Finsler manifold}
 \begin{szov} Let $(M,F)$ be a \emph{Finsler manifold}. We recall that the \emph{Finsler function} $F\colon TM\to \R$ here is assumed to be \emph{smooth} on $\overset\circ TM$,
 \emph{positive} \mbox{($F(v)>0,$} if $v \in \overset\circ TM$), \emph{positive-homogeneous of degree 1}
($F(\lambda\, v)=\lambda \, F(v)$ for all $v\in TM$ and positive real number $\lambda$), and it is also required that
the \emph{metric tensor}
\[
 g:=\frac{1}{2}\nabla\vl\nabla\vl F^2
\]
is \emph{fibrewise non-degenerate}.
The function $E:=\frac{1}{2}F^2$ is the \emph{energy function} of $(M,F)$.
The homogeneity of $F$ implies that over $\overset\circ TM$ we have
\[
 CF=F,\qquad CE=2E.
\]
The  \emph{Hilbert 1-form} of $(M,F)$ is
\begin{eqnarray*}
&&\widetilde{\theta}:=\nabla\vl E =F\nabla\vl F \mbox { -- in the pull-back formalism, }
\\
&& \theta:=d_\J E\mbox{ -- in the } \tau_{TM}\mbox{ formalism}.
\end{eqnarray*}
It is easy to check that
\[
 \widetilde{\theta}(\widetilde{X})=g(\widetilde{X},\delta) \mbox{ \emph{for each} } \widetilde{X}\in \Sec(\overset\circ{\pi}).
\]
$\widetilde{\theta}$ and $\theta$ are related by
\begin{equation}\label{theta}
\theta=\widetilde{\theta}\circ\j.
\end{equation}
\\
The 2-form
\[
\omega:=d\theta=dd_\J E
\]
on $\overset\circ{T}M$ is said to be the \emph{fundamental 2-form} of $(M,F)$.
Its relation to the metric tensor is given by
\begin{equation}\label{omegag}
 \omega(\J\xi,\eta)=g(\j\xi,\j\eta); \qquad \xi,\eta \in \mathfrak{X}(\overset\circ{T}M).
\end{equation}
The non-degeneracy of $g$ implies the non-degeneracy of $\omega$ -- and vice versa.
\end{szov}
\begin{lem}
 With the notations introduced above, let $(M,F)$ be a Finsler manifold, and let $X$ be a vector field on $M$. Then
\begin{align}\label{lxtheta}
&(\La\widetilde{\theta})\circ\j=\L \theta;
\\
&(\La g)(\j\xi,\j\eta)=(\L\omega)(\J\xi,\eta);\qquad \xi,\eta \in\mathfrak{X}(\overset\circ TM).\label{lxg}
\end{align}
\end{lem}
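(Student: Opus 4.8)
The plan is to derive both identities from the structural relations already recorded in the excerpt, namely $\widetilde\theta = $ pull-back data with $\theta = \widetilde\theta\circ\j$ (equation \eqref{theta}), the lifting formula $\omega(\J\xi,\eta)=g(\j\xi,\j\eta)$ (equation \eqref{omegag}), and the commutation lemma $\La\j\eta = \j\L\eta$ (equation \eqref{felcs}), together with the fact that $X\tl$ is $\tau$-related to $X$ so that $\La$ is compatible with $\L$ on functions and with brackets.

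For the first identity \eqref{lxtheta}, I would start from the left side evaluated on an arbitrary vector field $\eta$ on $\overset\circ TM$. By definition of the Lie derivative of a $1$-form in $\tenz^0_1(\pi)$,
\[
(\La\widetilde\theta)(\j\eta) = (X\tl)\big(\widetilde\theta(\j\eta)\big) - \widetilde\theta\big(\La\j\eta\big).
\]
Using \eqref{theta} the first term is $(X\tl)\big((\widetilde\theta\circ\j)(\eta)\big) = (X\tl)(\theta(\eta))$, and using \eqref{felcs} the second term is $\widetilde\theta(\j\L\eta) = \theta(\L\eta)$; so the right-hand side collapses to $(X\tl)(\theta(\eta)) - \theta(\L\eta) = (\L\theta)(\eta)$. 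Since $\eta$ was arbitrary this is exactly $(\La\widetilde\theta)\circ\j = \L\theta$. The only point needing a line of care is that $\La$ on the $1$-form $\widetilde\theta$ genuinely obeys this product/Leibniz formula against $\j\eta$; this is immediate from the tensor-derivation extension of $\La$ described in section 2, together with $\La(\text{function}) = X\tl(\text{function})$.

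For the second identity \eqref{lxg}, I would mirror the same bookkeeping but now on the $(0,2)$-tensor $g$ versus the $2$-form $\omega$. Writing out $(\La g)(\j\xi,\j\eta)$ via the tensor-derivation rule gives
\[
(\La g)(\j\xi,\j\eta) = (X\tl)\big(g(\j\xi,\j\eta)\big) - g(\La\j\xi,\j\eta) - g(\j\xi,\La\j\eta),
\]
and by \eqref{omegag} every occurrence of $g(\j\,\cdot\,,\j\,\cdot\,)$ is rewritten as $\omega(\J\,\cdot\,,\,\cdot\,)$; by \eqref{felcs} each $\La\j$ becomes $\j\L$, and I also need $\J\L\xi = \L\J\xi$ in the sense supplied by $[\J,X\tl]=0$ (relation (\lieJ c)), i.e.\ $\i\j\L\xi = \J\L\xi = \L\J\xi$. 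After these substitutions the right side becomes $(X\tl)(\omega(\J\xi,\eta)) - \omega(\J\L\xi,\eta) - \omega(\J\xi,\L\eta) = (\L\omega)(\J\xi,\eta)$, which is the claim. I expect the main obstacle to be precisely this last rewriting: one must be scrupulous that $\omega$ is only defined on $\overset\circ TM$ (hence $\xi,\eta\in\vm(\overset\circ TM)$ as stated), and that the identity $\omega(\J\xi,\eta)=g(\j\xi,\j\eta)$ is used symmetrically even though $\omega$ is alternating while $g$ is symmetric — this works because $\J\xi$ is vertical and $\omega$ restricted to a vertical slot is controlled entirely by $g$, but it warrants an explicit remark. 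Everything else is routine Leibniz-rule expansion.
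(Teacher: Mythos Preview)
Your argument is correct and uses exactly the same ingredients as the paper: the relation $\theta=\widetilde\theta\circ\j$, the identity $\omega(\J\xi,\eta)=g(\j\xi,\j\eta)$, the commutation $\La\j\eta=\j\L\eta$, and $[\J,X\tl]=0$. The only cosmetic difference is that the paper expands $(\L\omega)(\J\xi,\eta)$ and rewrites it as $(\La g)(\j\xi,\j\eta)$, whereas you run the same chain of substitutions in the opposite direction; your closing worry about the alternating/symmetric mismatch is unnecessary, since \eqref{omegag} is simply applied as stated.
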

\begin{proof}
We check only the less trivial second relation:
\begin{eqnarray*}
 &&(\mathcal{L}_{X\tl}\omega)(\mathbf{J}\xi,\eta)
=X\tl\omega(\mathbf{J}\xi,\eta)-\omega(\L\mathbf{J}\xi,\eta)-\omega(\mathbf{J}\xi,\L \eta)
\\
&&\stackrel{\eqref{felcs},\, \eqref{omegag}}{=}
X\tl g(\mathbf{j}\xi,\mathbf{j}\eta)
-\omega(\L\mathbf{J}\xi,\eta)-g(\mathbf{j}\xi,\La\j\eta).
\end{eqnarray*}
Since $\L\J\xi=[X\tl,\J\xi]=-[\J,X\tl]\xi+\J[X\tl,\xi]=\J\L\xi$, the second term at the right-hand side of the above relation takes the form
\[
 \omega(\L\mathbf{J}\xi,\eta)=\omega(\J\L\xi,\eta)\stackrel{\eqref{omegag}}{=}g(\j\L\xi,\j\eta)\stackrel{\eqref{felcs}}{=}g(\La\j\xi,\j\eta).
\]
So we obtain
\[
 (\mathcal{L}_{X\tl}\omega)(\mathbf{J}\xi,\eta)=X\tl g(\mathbf{j}\xi,\mathbf{j}\eta)-g(\La\j\xi,\j\eta)-g(\mathbf{j}\xi,\La\j\eta)=(\La g)(\j\xi,\j\eta).
\]
\end{proof}
\begin{szov} We continue to assume that $(M,F)$ is a Finsler manifold.
The $2n$-form
\[
  \sigma:=\frac{(-1)^ {\frac{n(n-1)}{2}}}{n!}\omega^ n,
\]
where $\omega^ n=\omega\wedge...\wedge\omega$ ($n$ factors) is a volume form on $\overset\circ{T}M$, called the \emph{Dazord volume form} of $(M,F)$.
By the \emph{divergence} of a vector field $\xi$ on $\overset\circ{T}M$ (with respect to $\sigma$) we mean the unique function
$\dvr \xi\in C^{\infty}(\overset\circ TM)$ such that
\[
 \mathcal{L}_\xi\,\sigma=(\dvr \xi)\,\sigma.
\]
\end{szov}
\begin{lem}\label{divc}
If $(M,F)$ is a Finsler manifold, then the divergence of the Liouville vector field $C$ on $\overset\circ{T}M$ with respect to the Dazord volume form is $n=dim\, M$.
\end{lem}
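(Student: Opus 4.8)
The plan is to compute $\mathcal{L}_C\sigma$ directly from the definition $\sigma=\frac{(-1)^{n(n-1)/2}}{n!}\omega^n$, reducing everything to the Lie derivative of the fundamental $2$-form $\omega=d\theta$ along the Liouville vector field. Since $\mathcal{L}_C$ is a degree-zero derivation of the exterior algebra $\Omega(\overset\circ{T}M)$ that commutes with $d$, we have $\mathcal{L}_C\omega^n=n\,(\mathcal{L}_C\omega)\wedge\omega^{n-1}$ (the $2$-forms commute) and $\mathcal{L}_C\omega=\mathcal{L}_C\,d\theta=d\,\mathcal{L}_C\theta$. Thus the whole computation hinges on $\mathcal{L}_C\theta$.

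To evaluate $\mathcal{L}_C\theta=\mathcal{L}_C\,d_\J E$ I would invoke the commutation relation \eqref{dJLC}, rewritten as $\mathcal{L}_C\circ d_\J=d_\J\circ\mathcal{L}_C-d_\J$, which gives $\mathcal{L}_C\,d_\J E=d_\J(\mathcal{L}_C E)-d_\J E$. By the homogeneity of $F$ we have $\mathcal{L}_C E=CE=2E$ on $\overset\circ{T}M$, and $d_\J E=\theta$, so $\mathcal{L}_C\theta=d_\J(2E)-\theta=2\theta-\theta=\theta$. Consequently $\mathcal{L}_C\omega=d\theta=\omega$, hence $\mathcal{L}_C\omega^n=n\,\omega^n$ and $\mathcal{L}_C\sigma=n\,\sigma$, so $\dvr C=n$ by the definition of the divergence with respect to $\sigma$.

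The argument is essentially routine once \eqref{dJLC} and the homogeneity $CE=2E$ are in hand; the only points requiring a little care are the bookkeeping in $\mathcal{L}_C\omega^n=n(\mathcal{L}_C\omega)\wedge\omega^{n-1}$ (which uses that $\omega$ has even degree) and the observation that $\sigma$ is a genuine volume form — so that $\dvr C$ is well-defined at all — which follows from the non-degeneracy of $\omega$ guaranteed by that of $g$. I do not anticipate any real obstacle here.
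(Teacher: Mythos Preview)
Your proof is correct and follows essentially the same route as the paper: both compute $\mathcal{L}_C\omega=\omega$ via \eqref{dJLC} and $CE=2E$, and then pass to $\mathcal{L}_C\omega^n=n\,\omega^n$. The only cosmetic difference is that you first show $\mathcal{L}_C\theta=\theta$ and then apply $d$, whereas the paper applies $d$ first and then uses \eqref{dJLC} on $d\,\mathcal{L}_C\,d_\J E$ directly.
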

\begin{proof}
$
 \mathcal{L}_C\omega=\mathcal{L}_C dd_\J E=d\mathcal{L}_C\,d_\J E\stackrel{\eqref{dJLC}}{=}dd_\J\,\mathcal{L}_CE-dd_\J E=2dd_\J E-dd_\J E=\omega.
$
From this it follows by induction that $\mathcal{L}_C \omega^n=n\,\omega^n$, whence our claim.
\end{proof}
\begin{szov}\label{finsler} If $(M,F)$ is a Finsler manifold, then there exists a unique spray $S$ for $M$ such that
\begin{equation}\label{spray}
 i_S\,dd_\J E=-dE\quad \mbox{ \emph{over} } \overset\circ{T}M,\mbox{ \emph{and} } S \upharpoonright o(M)=0.
\end{equation}
We say that $S$ is the \emph{canonical spray} of $(M,F)$; the Ehresmann connection induced by $S$ according to \eqref{c-g} is said to be
the \emph{canonical connection} of $(M,F)$.
It may be characterized as \emph{the unique torsion-free homogeneous Ehresmann connection $\H$ for $M$ which is compatible
with the Finsler function} in the sense that  $dF\circ\H=0$,
or, equivalently,
\[
X\hl F=0\quad \mbox{\emph{for all}}\quad X \in \mathfrak{X}(M).
\]
With the help of the canonical connection, we define the \emph{Sasaki extension} $G$ of the metric tensor $g$ of $(M,F)$ by the rule
\begin{equation}\label{sasaki}
 G(\xi,\eta):=g(\mathbf{j}\xi,\mathbf{j}\eta)+g(\mathcal{V}\xi,\mathcal{V}\eta); \qquad \xi,\eta \in \mathfrak{X}(\overset\circ TM),
\end{equation}
where $\V$ is the vertical mapping associated to $\H$.
Then $G$ is a Riemannian metric tensor on $\overset\circ{T}M$.

For subsequent applications, we collect here some further technical results.
\end{szov}
\begin{lem}
 For any section $\widetilde{X}$ in $\Sec(\pi)$, we have
\begin{equation}\label{nablavdelta}
 \nabla\vl_{\widetilde{X}}\delta=\widetilde{X}.
\end{equation}
\end{lem}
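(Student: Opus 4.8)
The plan is to reduce the identity to basic sections and then evaluate the bracket that defines $\nabla\vl$. Since $\nabla\vl\delta\in\tenz^1_1(\pi)$, the assignment $\widetilde X\mapsto\nabla\vl_{\widetilde X}\delta$ is $C^\infty(TM)$-linear; as $\Sec(\pi)$ is locally generated by the basic sections $\kal X$ ($X\in\vm(M)$), it is enough to prove $\nabla\vl_{\kal X}\delta=\kal X$ for every vector field $X$ on $M$.

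Fix such an $X$. By \eqref{DvY} and \eqref{ix} we have $\nabla\vl_{\kal X}\delta=\j[X\vl,\eta]$ for \emph{any} $\eta\in\vm(TM)$ with $\j\eta=\delta$, the choice of $\eta$ being immaterial by the remark following \eqref{DvY}. A convenient choice is $\eta:=S$, where $S$ is a spray for $M$ which is smooth on the whole of $TM$ — for instance the geodesic spray of an arbitrary linear connection on $M$; indeed $\i(\j S)=\J S=C=\i\delta$, so $\j S=\delta$ by the injectivity of $\i$. Letting $\H$ be the Ehresmann connection induced by $S$, formula \eqref{c-g} gives $[X\vl,S]=2X\hl-X\tl$, whence, applying $\j$ and using $\j\circ\H=1_{\Sec(\pi)}$ together with $\j(X\tl)=\kal X$ from \eqref{jx},
\[
\nabla\vl_{\kal X}\delta=\j[X\vl,S]=2\,\j(X\hl)-\j(X\tl)=2\,\kal X-\kal X=\kal X,
\]
which is the assertion.

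A route not relying on sprays takes $\eta$ to be the local vector field $y^i(\partial_i)\tl$ formed from induced coordinates $(x^i,y^i)$ on $TM$ (note $y^i=(x^i)\tl$, so $\j\eta=\delta$), expands $[X\vl,\eta]$ by the Leibniz rule using $X\vl y^i=(Xx^i)\vl$ from \eqref{Xvfv} and the verticality of $[X\vl,(\partial_i)\tl]=-[\partial_i,X]\vl$ (a consequence of \textup{(\lielift b)}), and then applies $\j$. The computation is mechanical in either form; the one point that deserves a word is securing a globally defined \emph{smooth} representative $\eta$, which the linear-connection spray provides — or, in the coordinate version, patching the purely local computation, which is automatic by the $C^\infty(TM)$-linearity of $\nabla\vl$. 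I do not expect any genuine obstacle here.
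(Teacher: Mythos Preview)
Your argument is correct and follows essentially the same route as the paper: both choose a spray $S$ as a representative $\eta$ with $\j\eta=\delta$ and compute $\nabla\vl_{\widetilde X}\delta=\j[\i\widetilde X,S]$. The only difference is that the paper invokes the \emph{Grifone identity} $\j[\i\widetilde X,S]=\widetilde X$ (Grifone, Prop.~I.7) directly for an arbitrary section, whereas you reduce to basic sections and recover this identity from \eqref{c-g} by hand---a slightly more self-contained but equivalent verification.
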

\begin{proof}
 Let $\H$ be a homogeneous Ehresmann connection for $M$ and let \mbox{$S:=\H\circ\delta$} be the spray associated to $\H$ (\textbf{3.2}). Then, applying the so-called Grifone identity (\cite{grifon}, Prop. I.7), we find that
\[
 \nabla\vl_{\widetilde{X}}\delta:=\j[\i\widetilde{X},\H\delta]=\j[\i\widetilde{X},S]=\widetilde{X}.
\]
\end{proof}
\begin{lem}
 The energy function of a Finsler manifold can be obtained from the metric tensor by
\begin{equation}\label{gdelta}
 g(\delta,\delta)=2E;
\end{equation}
from the fundamental 2-form by
\begin{equation}\label{omegacs}
 \omega(C,S)=2E,
\end{equation}
where $S$ is a spray for the base manifold.
\end{lem}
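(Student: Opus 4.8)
The plan is to establish the first identity by differentiating the energy function and exploiting homogeneity, and then to obtain the second identity from the first by translating between the pull-back formalism and the $\tau_{TM}$-formalism.

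First I would prove \eqref{gdelta}. The key observation is that $CE = 2E$ over $\overset\circ TM$, which follows from the positive-homogeneity of $F$ of degree $1$. Since $C = \i\delta$, we have $CE = (\i\delta)E = \nabla\vl_\delta E$ by the definition \eqref{Dvf} of the vertical differential. Now I would apply $\nabla\vl_\delta$ once more and use the fact that $\nabla\vl_\delta$ is a tensor derivation satisfying Leibniz's rule: from $2E = \nabla\vl_\delta E$ and $g = \tfrac{1}{2}\nabla\vl\nabla\vl F^2 = \nabla\vl\nabla\vl E$ we get
\[
\nabla\vl_\delta\nabla\vl_\delta E = \nabla\vl_\delta(2E) \quad\Longrightarrow\quad g(\delta,\delta) + \nabla\vl_{\nabla\vl_\delta\delta}E = 2\,\nabla\vl_\delta E = 4E.
\]
Here I must be slightly careful: $\nabla\vl_\delta\nabla\vl_\delta E$ expands by the derivation rule as $(\nabla\vl\nabla\vl E)(\delta,\delta) + \nabla\vl_{(\nabla\vl_\delta\delta)}E$. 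By \eqref{nablavdelta} we have $\nabla\vl_\delta\delta = \delta$, so the second term is $\nabla\vl_\delta E = 2E$, giving $g(\delta,\delta) + 2E = 4E$, that is $g(\delta,\delta) = 2E$.

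Next I would deduce \eqref{omegacs}. Let $S$ be a spray for $M$. By \eqref{S2} we have $\J S = C$, so $\omega(C,S) = \omega(\J S, S)$. Applying the bridge formula \eqref{omegag} with $\xi = \eta = S$ gives $\omega(\J S, S) = g(\j S, \j S)$. It remains to identify $\j S$: from \eqref{jx} and \eqref{S2}, or directly from $\J S = C = \i\delta$ together with $\J = \i\circ\j$ and the injectivity of $\i$, we get $\j S = \delta$. Hence $\omega(C,S) = g(\delta,\delta) = 2E$ by \eqref{gdelta}.

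I do not anticipate a serious obstacle here; the only delicate point is keeping track of the derivation rule for $\nabla\vl_\delta$ acting on the $(0,2)$-tensor $\nabla\vl\nabla\vl E$ — in particular remembering the $\nabla\vl_{\nabla\vl_\delta\delta}\,E$ correction term and invoking \eqref{nablavdelta} to evaluate it. Once that bookkeeping is correct, both identities fall out immediately, and the passage from \eqref{gdelta} to \eqref{omegacs} is a routine application of \eqref{S2}, \eqref{jx} and \eqref{omegag}.
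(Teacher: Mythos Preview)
Your proof of \eqref{gdelta} is essentially the paper's own argument: both unfold $g(\delta,\delta)=(\nabla\vl\nabla\vl E)(\delta,\delta)$ via the derivation rule, invoke \eqref{nablavdelta} to handle $\nabla\vl_\delta\delta$, and finish with $C(CE)-CE=4E-2E$.

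For \eqref{omegacs}, however, you take a genuinely different route. The paper computes $\omega(C,S)=dd_\J E(C,S)$ directly from the Cartan formula for the exterior derivative of the $1$-form $d_\J E$, obtaining $C(d_\J E(S))-S(d_\J E(C))-d_\J E([C,S])=C(CE)-0-d_\J E(S)=4E-2E$; this calculation is independent of \eqref{gdelta}. You instead reduce \eqref{omegacs} to \eqref{gdelta} via the bridge formula \eqref{omegag}: from $\J S=C$ and $\j S=\delta$ you get $\omega(C,S)=\omega(\J S,S)=g(\j S,\j S)=g(\delta,\delta)$. Your argument is shorter and makes the two identities manifestly equivalent, while the paper's approach has the minor virtue of deriving \eqref{omegacs} from first principles without relying on either \eqref{gdelta} or \eqref{omegag}.
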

\begin{proof}
\ $g(\delta,\delta)=\nabla\vl(\nabla\vl E)(\delta,\delta)=\nabla\vl_\delta(\nabla\vl E)(\delta)=\nabla\vl_\delta(\nabla\vl E(\delta))-\nabla\vl E(\nabla\vl _\delta\delta)
\\
\vspace{0,5 cm} \stackrel{\eqref{nablavdelta}}{=}\nabla\vl_\delta (CE)-\nabla\vl E(\delta)=C(CE)-CE=4E-2E=2E;$
\\
$\omega(C,S)=dd_\J E(C,S)=C\, d_\J E(S)-S\, (d_\J E(C))-d_\J E([C,S])
=C(CE)-d_\J E(S)=4E-2E=2E.$
\end{proof}
\begin{lem} \label{divs}
The divergence of the canonical spray of a Finsler manifold vanishes.
\end{lem}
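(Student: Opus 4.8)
The plan is to follow the pattern of the proof of Lemma \ref{divc}: I would show that the canonical spray $S$ leaves the fundamental 2-form $\omega=dd_\J E$ invariant, i.e.\ that $\mathcal{L}_S\omega=0$, and then deduce $\mathcal{L}_S\sigma=0$ (hence $\dvr S=0$) from the fact that $\sigma$ is, up to a constant, the top exterior power $\omega^n$.

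To establish $\mathcal{L}_S\omega=0$, I would apply H.~Cartan's formula \eqref{magic} on the deleted bundle $\overset\circ{T}M$ (where $S$, and hence the whole computation, is smooth):
\[
\mathcal{L}_S\omega=i_S\,d\omega+d\,i_S\omega.
\]
The first summand vanishes because $\omega=d\theta$ is exact, so $d\omega=0$. For the second summand, observe that the defining equation \eqref{spray} of the canonical spray says precisely $i_S\,dd_\J E=-dE$ over $\overset\circ{T}M$, i.e.\ $i_S\omega=-dE$; therefore $d\,i_S\omega=-d\,dE=0$. Combining the two, $\mathcal{L}_S\omega=0$.

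Finally, since $\mathcal{L}_S$ obeys the product rule with respect to the wedge product, $\mathcal{L}_S\omega^n=n\,(\mathcal{L}_S\omega)\wedge\omega^{n-1}=0$, whence $\mathcal{L}_S\sigma=0$ and consequently $\dvr S=0$. I do not anticipate a genuine obstacle here: the argument is just the standard observation that $S$ is a ``Hamiltonian-type'' vector field for the symplectic form $\omega$ (with ``Hamiltonian'' $E$), so its flow preserves $\omega$ and therefore the Liouville volume; the only point worth a word of care is that $\omega$, $\sigma$ and $S$ all live on $\overset\circ{T}M$, so Cartan's formula and the Leibniz rule are being used there without any loss.
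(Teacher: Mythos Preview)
Your proposal is correct and is essentially identical to the paper's own proof: both apply Cartan's formula \eqref{magic} to $\mathcal{L}_S\omega$, kill the $i_S\,d\omega$ term because $\omega=dd_\J E$ is exact, and kill the $d\,i_S\omega$ term via the defining relation \eqref{spray}, then pass from $\mathcal{L}_S\omega=0$ to $\mathcal{L}_S\sigma=0$. The only difference is that you spell out the last step (Leibniz rule on $\omega^n$) while the paper just writes ``which implies our claim''.
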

\begin{proof}\
$
 \mathcal{L}_S \omega=\mathcal{L}_Sdd_\J E\stackrel{\eqref{magic}}{=}i_Sddd_\J E+di_Sdd_\J E\stackrel{\eqref{spray}}{=}-ddE=0,
$
which
\\
implies our claim.
\end{proof}
\begin{szov} Let $(M,F)$ be a Finsler manifold.
We say that a vector field $X$ on $M$ is a \emph{projective}, resp.\ an \emph{affine vector field} of $(M,F)$, if it is a projective vector field, resp.\ a Lie symmetry for the canonical spray of $(M,F)$.
A vector field $X$ on $M$ is said to be a \emph{conformal vector field}, if the Lie derivative of the metric tensor of $(M,F)$ with respect to the
complete lift of $X$ satisfies the relation
\begin{equation}\label{lxcg}
 \La g=\varphi\, g
\end{equation}
for a continuous function $\varphi\colon TM\to\R$, of class $C^1$ on $\overset\circ TM$, called the \emph{conformal factor} of $X$.
Particular cases of conformal vector fields are \emph{homothetic vector fields}
for which the conformal factor is a constant function and \emph{isometric vector fields}, also called
\emph{Killing vector fields}, for which the conformal factor is the zero function on $TM$.
\end{szov}
\begin{lem}\label{conform}
 If $X$ is a conformal vector field on a Finsler manifold $(M,F)$ with conformal factor $\varphi$, then
 $
X\tl E=\varphi\, E.
 $
\end{lem}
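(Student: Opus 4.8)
The plan is to apply both sides of the defining relation $\La g = \varphi\, g$ to the pair $(\delta,\delta)$ and use the key identities already at our disposal, namely $g(\delta,\delta)=2E$ from \eqref{gdelta}, the fact that $\La\delta=0$ from \eqref{lxdelta}, and the product (Leibniz) rule for the tensor derivation $\La$ acting on the type $(0,2)$ tensor $g$.

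\medskip

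First I would write out $(\La g)(\delta,\delta)$ using the product rule for tensor derivations: since $\La$ differentiates a type $(0,2)$ tensor by
\[
(\La g)(\X,\Y) = \la_{X\tl}\bigl(g(\X,\Y)\bigr) - g(\la_{X\tl}\X,\Y) - g(\X,\la_{X\tl}\Y),
\]
setting $\X=\Y=\delta$ and recalling $\la_{X\tl}\varphi = X\tl\varphi$ for a function $\varphi\in C^\infty(TM)$ together with \eqref{lxdelta}, the last two terms drop out and we are left with
\[
(\La g)(\delta,\delta) = X\tl\bigl(g(\delta,\delta)\bigr) \stackrel{\eqref{gdelta}}{=} X\tl(2E).
\]
On the other hand, evaluating the right-hand side of \eqref{lxcg} at $(\delta,\delta)$ gives $(\varphi\, g)(\delta,\delta) = \varphi\, g(\delta,\delta) \stackrel{\eqref{gdelta}}{=} 2\varphi\, E$. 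Equating the two expressions yields $X\tl(2E) = 2\varphi\, E$, hence $X\tl E = \varphi\, E$, which is the claim.

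\medskip

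I do not anticipate a serious obstacle here; the only point requiring a little care is the justification that the product rule for $\La$ applies to $g$ in exactly the stated form, but this is precisely how $\La$ was defined in Section 2 (``by extending it to the whole tensor algebra in such a way that $\la_\xi$ satisfies the product rule of tensor derivations''), and $X\tl$ is projectable, so everything is legitimate. A secondary point is that $g$ and $\varphi$ are only of class $C^1$ on $\overset\circ TM$ rather than smooth, but since the identity \eqref{gdelta} and \eqref{lxdelta} hold on $\overset\circ TM$ and the computation is purely algebraic once those are in hand, the regularity issue does not affect the argument. Thus the entire proof is a one-line evaluation at the canonical section $\delta$.
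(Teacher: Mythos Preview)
Your proof is correct and is essentially identical to the paper's own argument: both evaluate the conformal relation $\La g=\varphi\,g$ at $(\delta,\delta)$, invoke the Leibniz rule for $\La$, use $\La\delta=0$ from \eqref{lxdelta}, and appeal to $g(\delta,\delta)=2E$ from \eqref{gdelta}. The only cosmetic difference is that the paper runs the computation as a single chain starting from $2X\tl E$, while you compute each side of \eqref{lxcg} separately before equating.
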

\begin{proof}\
$2X\tl E\stackrel{\eqref{gdelta}}{=}X\tl (g(\delta,\delta))=(\La g)(\delta,\delta)+2\, g(\La \delta,\delta)\stackrel{\eqref{lxdelta}}{=}
(\La g)(\delta,\delta)
\\
\stackrel{\eqref{lxcg}}{=}\varphi\,g(\delta,\delta)\stackrel{\eqref{gdelta}}{=}2\,\varphi E.$
\end{proof}
\begin{lem}\label{verticallift}
If $X$ is a conformal vector field on a Finsler manifold $(M,F)$, then the conformal factor of $X$ is the vertical lift of a smooth function on $M$.
\end{lem}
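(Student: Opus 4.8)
The argument is short, and the plan is as follows: first establish that the conformal factor $\varphi$ is positive-homogeneous of degree $0$ on $\overset\circ{T}M$, and then exploit its continuity on all of $TM$ to upgrade this to fibrewise constancy.

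The starting point is Lemma~\ref{conform}, which gives $X\tl E=\varphi\,E$. Since $X\tl$ is a globally smooth vector field on $TM$ and $E=\tfrac12F^2$ is smooth and strictly positive on $\overset\circ{T}M$, this already exhibits $\varphi=(X\tl E)/E$ as a smooth function on $\overset\circ{T}M$ (so the $C^1$ hypothesis in the definition is in fact an overstatement). Next I would differentiate $X\tl E=\varphi\,E$ along the Liouville vector field $C$. Using $[C,X\tl]=0$ (relation~(\lieC b)) together with $CE=2E$, the left-hand side becomes
\[
C(X\tl E)=[C,X\tl]E+X\tl(CE)=X\tl(2E)=2\,X\tl E=2\varphi\,E,
\]
whereas the Leibniz rule applied to the right-hand side gives
\[
C(\varphi\,E)=(C\varphi)\,E+\varphi\,(CE)=(C\varphi)\,E+2\varphi\,E.
\]
Comparing the two and dividing by $E$ (which is nonzero on $\overset\circ{T}M$) yields $C\varphi=0$ there; equivalently, $\varphi(\lambda v)=\varphi(v)$ for every $v\in\overset\circ{T}M$ and every real number $\lambda>0$.

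To finish, fix $p\in M$ and let $v\in\overset\circ{T}_pM$ be arbitrary. Since $\lambda v\to 0_p$ as $\lambda\to 0^+$ and $\varphi$ is continuous on $TM$, the degree-$0$ homogeneity gives $\varphi(v)=\lim_{\lambda\to 0^+}\varphi(\lambda v)=\varphi(0_p)$. Hence $\varphi$ is constant on each fibre $T_pM$, with value $f(p):=\varphi(0_p)$, so that $\varphi=f\circ\tau=f\vl$. Smoothness of $f$ follows by composing with a local section that avoids the zero section: about any point of $M$ there is an open set $U$ carrying a nowhere-vanishing vector field $Y\in\vm(U)$ (e.g.\ a coordinate field), which we may regard as a smooth map $U\to\overset\circ{T}M$; on $U$ we then have $f=(f\circ\tau)\circ Y=\varphi\circ Y$, which is smooth because $\varphi$ is smooth on $\overset\circ{T}M$. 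The computation itself is entirely routine; the only point that needs attention—rather than a genuine obstacle—is this passage from degree-$0$ homogeneity to fibrewise constancy, which truly relies on the continuity of $\varphi$ across the zero section.
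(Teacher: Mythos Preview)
Your proof is correct and follows essentially the same approach as the paper: both start from Lemma~\ref{conform}, apply $C$ to the identity $X\tl E=\varphi E$, use $[C,X\tl]=0$ and $CE=2E$ to derive $C\varphi=0$, and then pass from degree-$0$ homogeneity to fibrewise constancy. The only difference is that the paper delegates the last step (homogeneity of degree~$0$ plus continuity across the zero section implies $\varphi=f\circ\tau$ with $f\in C^\infty(M)$) to an external reference, while you spell it out explicitly via the limit $\lambda\to 0^+$ and a local nowhere-vanishing section.
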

\begin{proof}\
In view of the previous lemma, $X\tl E=\varphi\, E$, where
\\
\mbox{$\varphi\in C^0(TM)\cap C^1(\overset\circ TM)$}.
Acting on both sides of this relation by the Liouville vector field, we get
on the one hand
\[
C(X\tl E)=C(\varphi \, E)=(C\varphi)E+2\varphi E,
\]
on the other hand
\[
 C(X\tl E)=[C,X\tl]E+X\tl(CE)=2X\tl E=2\varphi E,
\]
so it follows that $(C\varphi)E=0$, and hence $C\, \varphi=0$.
This means that $\varphi$ is \mbox{positive-homogeneous} of degree 0, which implies (see, e.g., \cite{setting}, 2.6, \mbox{Lemma 2}) that $\varphi$ is of the form
$\varphi=f\circ\tau,\ f\in C^{\infty}(M)$.
\end{proof}
\begin{all}\label{conformal}
Let $(M,F)$ be a Finsler manifold. For a vector field $X$ on $M$,
the following conditions are equivalent:
\begin{itemize}
\item [(i)]$X$ is a conformal vector field with conformal factor $\varphi$;
\item [(ii)]$X\tl E=\varphi\, E$;
\item [(iii)]$\mathcal{L}_{X\tl}\theta=\varphi\,\theta$;
\item [(iv)]$\widetilde{\mathcal{L}}_{X\tl}\widetilde{\theta}=\varphi \,\widetilde{\theta}$;
\item [(v)]$\mathcal{L}_{X\tl}\omega=\varphi\,\omega+d\varphi \wedge d_\mathbf{J}E;\quad \varphi=f\circ\tau, \quad f\in C^{\infty}(M).$
\end{itemize}
\end{all}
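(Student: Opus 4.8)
The plan is to establish the cycle of implications $(i)\Rightarrow(ii)\Rightarrow(iv)\Rightarrow(iii)\Rightarrow(v)\Rightarrow(i)$, exploiting the linkages between the pull-back and $\tau_{TM}$ formalisms already assembled in the preceding lemmas. The implication $(i)\Rightarrow(ii)$ is Lemma~\ref{conform} verbatim; note also that, by Lemma~\ref{verticallift}, once $(i)$ holds the factor $\varphi$ is automatically of the form $f\circ\tau$ with $f\in C^\infty(M)$, so I will silently carry this along and only need to re-derive it in the closing implication. For $(ii)\Rightarrow(iv)$, I would differentiate the Hilbert $1$-form $\widetilde\theta=\nabla\vl E$ directly. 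Since $\widetilde\theta(\X)=g(\X,\delta)$ for every $\X\in\Sec(\splp)$, applying $\widetilde{\mathcal L}_{X\tl}$ and the product rule gives $(\widetilde{\mathcal L}_{X\tl}\widetilde\theta)(\X)=X\tl g(\X,\delta)-\widetilde\theta(\widetilde{\mathcal L}_{X\tl}\X)$; expanding $X\tl g(\X,\delta)$ via the product rule for $\widetilde{\mathcal L}_{X\tl}$ on the $(0,2)$-tensor $g$ and using $\widetilde{\mathcal L}_{X\tl}\delta=0$ (equation~\eqref{lxdelta}), together with $(ii)$ in the guise $(\widetilde{\mathcal L}_{X\tl}g)(\delta,\delta)=2\varphi E=\varphi\,g(\delta,\delta)$ — which is what the proof of Lemma~\ref{conform} actually shows holds in the stronger pointwise form $\widetilde{\mathcal L}_{X\tl}g=\varphi g$? — here I must be careful: $(ii)$ by itself only records the $(\delta,\delta)$-component, so the honest route is $(i)\Rightarrow(iv)$ using $\widetilde{\mathcal L}_{X\tl}g=\varphi g$ and then $(ii)\Rightarrow(i)$ separately. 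The cleanest organisation is therefore $(i)\Leftrightarrow(iv)$, $(iv)\Leftrightarrow(iii)$, $(iii)\Leftrightarrow(v)$, and finally $(ii)\Rightarrow(i)$, with $(i)\Rightarrow(ii)$ being Lemma~\ref{conform}.

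For $(i)\Leftrightarrow(iv)$: from $\widetilde\theta(\X)=g(\X,\delta)$ and $\widetilde{\mathcal L}_{X\tl}\delta=0$ one gets $\widetilde{\mathcal L}_{X\tl}\widetilde\theta=(\widetilde{\mathcal L}_{X\tl}g)(\cdot,\delta)$; so $(i)$ immediately yields $(iv)$ with the same $\varphi$. Conversely, given $(iv)$, I recover $X\tl E=\varphi E$ by evaluating on $\delta$: $(\widetilde{\mathcal L}_{X\tl}\widetilde\theta)(\delta)=X\tl(\widetilde\theta(\delta))-\widetilde\theta(0)=X\tl(2E)$ since $\widetilde\theta(\delta)=g(\delta,\delta)=2E$ by~\eqref{gdelta}, while $\varphi\,\widetilde\theta(\delta)=2\varphi E$; this is $(ii)$, and then $(ii)\Rightarrow(i)$ closes the loop. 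The equivalence $(iv)\Leftrightarrow(iii)$ is essentially formal: relation~\eqref{theta} says $\theta=\widetilde\theta\circ\j$, and relation~\eqref{lxtheta} of the first Lemma of this section gives $(\widetilde{\mathcal L}_{X\tl}\widetilde\theta)\circ\j=\mathcal L_{X\tl}\theta$; since $\j$ is surjective, $\widetilde{\mathcal L}_{X\tl}\widetilde\theta=\varphi\,\widetilde\theta$ is equivalent to $\mathcal L_{X\tl}\theta=(\varphi\circ{\rm id})\,\theta=\varphi\,\theta$ (here one uses that $\varphi$ is a function on $TM$ and $\theta$ lives on $TM$, so the factor passes through $\j$ unchanged).

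The step $(iii)\Leftrightarrow(v)$ is where the real work lies, and I expect it to be the main obstacle. Starting from $\omega=d\theta$, apply the exterior derivative to $(iii)$: $d(\mathcal L_{X\tl}\theta)=\mathcal L_{X\tl}d\theta=\mathcal L_{X\tl}\omega$ because $d$ commutes with $\mathcal L_{X\tl}$, while $d(\varphi\,\theta)=d\varphi\wedge\theta+\varphi\,d\theta=d\varphi\wedge\theta+\varphi\,\omega$. Since $\theta=d_\J E$, this is exactly $\mathcal L_{X\tl}\omega=\varphi\,\omega+d\varphi\wedge d_\J E$, giving $(iii)\Rightarrow(v)$ — but I must also produce the assertion $\varphi=f\circ\tau$: this does not come for free from $(v)$ alone, so I should derive it either by contracting $(v)$ with $C$ and $S$ and invoking~\eqref{omegacs}, $\mathcal L_C$-homogeneity and Lemma~\ref{verticallift}'s argument, or simply note it follows once we know $(v)\Rightarrow(iii)\Rightarrow(i)$. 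For the reverse $(v)\Rightarrow(iii)$, I would like to ``integrate'' $d$, which is not legitimate in general; instead I use that $i_C\omega=i_C\,dd_\J E=-d_\J E+\text{(something)}$ — more precisely the standard identities $i_C d_\J E=0$ and $\mathcal L_C d_\J E=2d_\J E-d_\J E=d_\J E$ (from~\eqref{dJLC} with $\mathcal L_C E=2E$), hence $i_C\omega=\mathcal L_C\theta-d\,i_C\theta=\theta$. Contracting $(v)$ with $C$: $i_C(\mathcal L_{X\tl}\omega)=\varphi\,i_C\omega+(C\varphi)\,d_\J E-(d_\J E(C))\,d\varphi=\varphi\,\theta+(C\varphi)\theta$, and using $[C,X\tl]=0$ (equation~(\lieC b)) the left side equals $\mathcal L_{X\tl}(i_C\omega)=\mathcal L_{X\tl}\theta$; since $\varphi=f\circ\tau$ forces $C\varphi=0$, this yields precisely $(iii)$. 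Thus the fibrewise-constancy hypothesis built into $(v)$ is exactly what makes $(v)\Rightarrow(iii)$ go through, and the whole cycle closes.
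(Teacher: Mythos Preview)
Your argument has a genuine gap: you never establish any implication back to $(i)$. You announce ``$(ii)\Rightarrow(i)$'' as part of your ``cleanest organisation'', and you reduce $(iv)\Rightarrow(i)$ to it, but nowhere in the proposal is $(ii)\Rightarrow(i)$ actually proved. Your final sentence claims ``the whole cycle closes'', but tracing through what you have written gives only
\[
(i)\Rightarrow(ii),\quad (i)\Rightarrow(iv),\quad (iv)\Leftrightarrow(iii),\quad (iii)\Leftrightarrow(v),\quad (iv)\Rightarrow(ii),
\]
and nothing points back into $(i)$. The implication $(ii)\Rightarrow(i)$ is in fact the most substantial one: $X\tl E=\varphi E$ records a single scalar relation, while $(i)$ is a statement about the full tensor $g=\nabla\vl\nabla\vl E$, and recovering $(i)$ from $(ii)$ requires more than a contraction.

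The paper closes the cycle differently, via $(v)\Rightarrow(i)$, using the identity \eqref{lxg},
\[
(\La g)(\j\xi,\j\eta)=(\L\omega)(\J\xi,\eta),
\]
which you never invoke. Substituting $(v)$ into the right-hand side and using that $d_\J\varphi=0$ (since $\varphi=f\circ\tau$) and $d_\J E\circ\J=0$ kills the $d\varphi\wedge d_\J E$ term and leaves $\varphi\,\omega(\J\xi,\eta)=\varphi\,g(\j\xi,\j\eta)$. That is the missing link. Your contraction-with-$C$ argument for $(v)\Rightarrow(iii)$ is correct and rather elegant (the paper does not do this step at all; it goes $(iii)\Rightarrow(v)\Rightarrow(i)$ one-way), and your direct $(i)\Rightarrow(iv)$ via $\La\widetilde\theta=(\La g)(\cdot,\delta)$ is also fine, but neither of these helps you return to $(i)$. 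To repair the proof, either supply $(v)\Rightarrow(i)$ via \eqref{lxg} as above, or prove $(ii)\Rightarrow(iii)$ directly (the paper checks it on $Y\vl$ and $Y\tl$) and then use your $(iii)\Rightarrow(v)$ followed by $(v)\Rightarrow(i)$.
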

\emph{In conditions} $(ii)-(iv)$, $\varphi \in C^0(TM)\cap C^1(\overset\circ TM)$.
\begin{proof}
The arrangement of our reasoning follows the scheme
\begin{center}\begin{tabular}{c@{ }c@{ }c@{ }c@{ }c@{ }}
$(i)$&$\Longrightarrow$&$(ii)$&&
\\
\rotatebox[origin=c]{90}{$\Longrightarrow$}&&\rotatebox[origin=c]{90}{$\Longleftarrow$}&&
\\
$(v)$& $\Longleftarrow$ &$(iii)$&$\iff$&$(iv).$
\end{tabular}\end{center}
$\ (i)\Longrightarrow\, (ii)\quad$ This is just a restatement of Lemma \ref{conform}.
\\
$(ii)\Longrightarrow (iii)\quad$ Let $Y$ be a vector field on $M$. We have on the one hand
\begin{eqnarray*}
 (\L \theta)(Y\vl)&=&X\tl(\theta(Y\vl))-\theta([X\tl,Y\vl])\stackrel{\textrm{(\lielift b)}}{=}X\tl(\theta(Y\vl))-\theta([X,Y]\vl)=0
\\
&=&(\varphi\,\theta)(Y\vl),
\end{eqnarray*}
since the vertical vector fields are annullated by the 1-form $\theta=d_\J E$. On the other hand,
\begin{align*}
(\L\theta)(Y\tl)
& \hspace{0.2 cm}=X\tl(d_\J E(Y\tl))-d_\J E([X\tl,Y\tl])
\stackrel{\textrm{(\lielift c)}}{=}X\tl(Y\vl E)-[X,Y]\vl E
\\
&\stackrel{\textrm{(\lielift b)}}{=}X\tl(Y\vl E)-[X\tl,Y\vl] E
=Y\vl(X\tl E)\stackrel{(ii)}{=}Y\vl(\varphi\,E)
\stackrel{(*)}{=} \varphi (Y\vl E)
\\
&\hspace{0.2 cm}=(\varphi\, d_\J E)(Y\tl)
=(\varphi\,\theta)(Y\tl).
\end{align*}
At step $(\ast)$ we used the fact that our condition $X\tl E=\varphi E$ implies, as it turns out from the proof of Lemma \ref{verticallift}, that $\varphi$ is a vertical lift.
Thus $\L\theta=\varphi\theta$, as we claimed.
\\
$(iii)\Longrightarrow (v)\quad$
 \vspace{-0.5 cm}
 \[\L\omega=\L d\,\theta=d\L\theta\stackrel{(iii)}{=}d(\varphi\,\theta)=d\,\varphi\wedge\theta +\varphi d\theta
=\varphi\,\omega+d\varphi\wedge d_\J E.\]
To check that the function $\varphi$ here is a vertical lift, we evaluate both sides of $(iii)$ at a spray $S$. Then
$\theta(S)=d_J E (S)=d\,E(C)=2E$, while
\[
 (\L\theta)(S)=X\tl(d_\J E(S))-d_\J E([X\tl, S])=2 X\tl E-\J[X\tl, S] E=2X\tl E,
\]
since $[X\tl, S]$ is vertical (see, e.g., \cite{setting}, p. 1350). Thus we obtain that $X\tl E=\varphi\,E$, which implies, as we have just remarked, that
$\varphi=f\circ\tau$, $f\in C^{\infty}(M).$
\\
$(v)\Longrightarrow (i)\quad$ For any vector fields $\xi,\eta$ on $\overset\circ TM,$
\begin{eqnarray*}
 (\La g)(\j\xi,\j\eta)&\stackrel{(\ref{lxg})}{=}&(\L\omega)(\J\xi,\eta)\stackrel{(v)}{=}(\varphi\,\omega+d\,\varphi\wedge d_\J E)(\J\xi, \eta)
\\
&=&\varphi\omega(\J\xi,\eta)+
d_\J\varphi(\xi)d_\J E(\eta)-d\varphi (\eta)d_\J E(\J\xi)
\\
&\stackrel{d_\J \varphi = 0}{=}&\varphi\omega(\J\xi, \eta)
\stackrel{(\ref{omegag})}{=}(\varphi\, g)(\j\xi,\j \eta),
\end{eqnarray*}
hence $\La g=\varphi g.$
\\
$(iii)\iff (iv)\quad$ If $\L\theta=\varphi\,\theta$, then for any vector field $\xi$ on $\overset\circ TM$,
\[
 (\La\widetilde{\theta})(\j\xi)\stackrel{\eqref{lxtheta}}{=}(\L\theta)(\xi)\stackrel{(iii)}{=} (\varphi\,\theta) (\xi)\stackrel{\eqref{theta}}{=}\varphi\,\widetilde{\theta}(\j\xi),
\]
whence $\La\widetilde{\theta}=\varphi\,\widetilde{\theta}$. The converse may be checked in the same way.
\end{proof}
We note that relation $(v)$, as a characterization of conformal vector fields
on a Finsler manifold, was announced first by J. Grifone \cite{grifon3}.
\begin{kov}
Let $(M,F)$ be a Finsler manifold. For a vector field $X$ on $M$, the following conditions are equivalent:
\begin{itemize}\label{homothetic}
\item [(i)] $X$ is a homothetic vector field, i.e., $\widetilde{\mathcal{L}}_{X\tl}g=\alpha\, g$, where $\alpha$ is a real number;
\item [(ii)] the energy function is an eigenfunction of $X\tl$ with eigenvalue $\alpha$, i.e., $X\tl E=\alpha \, E$;
\item [(iii)] $\L \theta=\alpha\,\theta$;
\item [(iv)] $\La\widetilde{\theta}=\alpha\,\widetilde{\theta}$;
\item [(v)] $\mathcal{L}_{X\tl}\omega=\alpha\,\omega$.
\end{itemize}
In conditions $(iii)-(v)$ $\alpha$ is a real number. With the choice $\alpha:=0$ we obtain criteria that a
vector field $X$ on $M$ be a Killing vector field of $(M,F)$.
\hspace{\stretch{5}} \qedsymbol
\end{kov}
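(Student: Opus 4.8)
\textbf{Proof plan for Corollary \ref{homothetic}.}
The plan is to deduce this corollary directly from Proposition \ref{conformal} by specializing the conformal factor. First I would observe that in each of the five conditions of Proposition \ref{conformal} the factor $\varphi$ is, \emph{a priori}, merely a continuous function on $TM$, smooth on $\overset\circ TM$, but that the homothetic case is precisely the case in which this $\varphi$ is forced to be a constant. So the content of the corollary is that each of the five listed equations, with $\alpha$ inserted in place of $\varphi$, is equivalent to $X$ being conformal with a \emph{constant} conformal factor. The equivalences $(i)\iff(ii)\iff(iii)\iff(iv)$ then follow immediately: by Proposition \ref{conformal} (applied with $\varphi=\alpha$, which is trivially in $C^0(TM)\cap C^1(\overset\circ TM)$) each implies $X$ is conformal with that constant factor, and conversely if $X$ is conformal with factor $\varphi$ and any one of these equations holds with a real $\alpha$, then by the uniqueness of the conformal factor (read off, e.g., from $X\tl E=\varphi E$ together with $E>0$ on $\overset\circ TM$) we must have $\varphi=\alpha$, so $X$ is homothetic.

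The only step requiring a small extra argument is the equivalence with $(v)$, since the analogue in Proposition \ref{conformal}, condition $(v)$, carries the correction term $d\varphi\wedge d_\mathbf{J}E$, whereas here $(v)$ reads simply $\mathcal{L}_{X\tl}\omega=\alpha\,\omega$. The key remark is that when $\varphi=\alpha$ is constant, $d\varphi=0$, so the correction term vanishes and Proposition \ref{conformal}$(v)$ collapses to exactly the present $(v)$; this gives $(i)$–$(iv)\Longrightarrow(v)$. For the converse, from $\mathcal{L}_{X\tl}\omega=\alpha\,\omega$ I would evaluate both sides at the pair $(C,S)$, where $S$ is the canonical spray of $(M,F)$, exactly as in the proof of $(iii)\Longrightarrow(v)$ in Proposition \ref{conformal}: using $\omega(C,S)=2E$ from \eqref{omegacs}, and computing $(\mathcal{L}_{X\tl}\omega)(C,S)=X\tl\omega(C,S)-\omega([X\tl,C],S)-\omega(C,[X\tl,S])$ with $[X\tl,C]=0$ by (\lieC b) and $[X\tl,S]$ vertical (so $\omega(C,[X\tl,S])=0$ since $C=\mathbf{J}S$ and $\omega(\mathbf{J}\cdot,\text{vertical})=g(\cdot,0)=0$ by \eqref{omegag}), one gets $(\mathcal{L}_{X\tl}\omega)(C,S)=2X\tl E$. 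Comparing, $2X\tl E=2\alpha E$, i.e. $X\tl E=\alpha E$, which is condition $(ii)$; this closes the loop.

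I expect no serious obstacle here — the corollary is a routine specialization. The one point to handle with minimal care is the direction $(v)\Longrightarrow(ii)$ just sketched, because it is the only one where the vanishing of the correction term in Proposition \ref{conformal}$(v)$ is not automatic and must be recovered by a separate evaluation; everything else is immediate from Proposition \ref{conformal} and the uniqueness of the conformal factor. Finally, the last sentence of the statement, that $\alpha=0$ characterizes Killing vector fields, needs no proof: it is simply the definition of a Killing (isometric) vector field as a conformal vector field whose conformal factor is the zero function, read off the equivalence $(i)\iff(ii)$ with $\alpha=0$.
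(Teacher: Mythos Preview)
Your proposal is correct and follows the paper's approach: the paper states this corollary with nothing but a \qedsymbol, treating it as an immediate specialization of Proposition \ref{conformal} to constant $\varphi=\alpha$. Your extra argument for $(v)\Longrightarrow(ii)$ via evaluation at $(C,S)$ is valid but unnecessary: since $d\alpha=0$, the equation $\mathcal{L}_{X\tl}\omega=\alpha\,\omega$ is literally Proposition \ref{conformal}$(v)$ with $\varphi=\alpha$ (a constant is trivially a vertical lift), so the implication $(v)\Longrightarrow(i)$ there already delivers $\La g=\alpha\,g$ directly.
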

\begin{all}\label{confsasaki}
Let $(M,F)$ be a Finsler manifold. If a vector field $X$ on $M$ is both affine and conformal, then
$X\tl$ is a conformal vector field on the Riemannian manifold \mbox{$(\overset\circ TM, G)$}, i.e.,
\mbox{$\L G=\varphi\, G$}, where \mbox{$\varphi \in C^0(TM)\cap C^1(\overset\circ TM)$} and $G$ is
the Sasaki extension of the metric tensor of $(M,F)$.

Conversely, if $X\tl$ is a conformal vector field of $(\overset\circ TM,G)$,
then $X$ is a conformal vector field on the Finsler manifold $(M,F)$.
\end{all}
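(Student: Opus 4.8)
The plan is to reduce everything to the decomposition $G(\xi,\eta)=g(\j\xi,\j\eta)+g(\V\xi,\V\eta)$ and to compute $\L G$ term by term, using the characterizations of affine and conformal vector fields already in hand. The first summand is easy: by \eqref{lxg} we have $(\La g)(\j\xi,\j\eta)=(\L\omega)(\J\xi,\eta)$, but more directly, from the Leibniz rule for $\L$ applied to the scalar $g(\j\xi,\j\eta)$ and the identity \eqref{felcs} (namely $\La\j\eta=\j\L\eta$), one gets
\[
\L\big(g(\j\xi,\j\eta)\big)=(\La g)(\j\xi,\j\eta)+g(\j\L\xi,\j\eta)+g(\j\xi,\j\L\eta).
\]
Since $X$ is conformal, $\La g=\varphi\,g$ with $\varphi=f\circ\tau$ a vertical lift (Lemma~\ref{verticallift}), so this term contributes $\varphi\,g(\j\xi,\j\eta)$ to $\L G$ after the first-derivative terms are absorbed into the corresponding terms of $G(\L\xi,\eta)+G(\xi,\L\eta)$.

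The second summand requires an analogue of \eqref{felcs} for the vertical mapping $\V$, i.e.\ a commutation rule $\La\V\eta=\V\L\eta$ — and this is exactly where affineness enters. By condition (vii) of Proposition~\ref{affine}, $X$ is a Lie symmetry of the canonical spray $S$ if and only if $\La\circ\V=\V\circ\L$. (One must note here that the canonical connection of $(M,F)$ is the Ehresmann connection induced by the canonical spray, so "affine for $(M,F)$" is precisely "Lie symmetry of $S$", and Proposition~\ref{affine} applies with $\H$ the canonical connection.) Granting this, the same Leibniz computation gives
\[
\L\big(g(\V\xi,\V\eta)\big)=(\La g)(\V\xi,\V\eta)+g(\V\L\xi,\V\eta)+g(\V\xi,\V\L\eta)=\varphi\,g(\V\xi,\V\eta)+g(\V\L\xi,\V\eta)+g(\V\xi,\V\L\eta).
\]
Adding the two identities, the three first-derivative terms reassemble into $G(\L\xi,\eta)+G(\xi,\L\eta)$, and we are left with
\[
(\L G)(\xi,\eta)=\L\big(G(\xi,\eta)\big)-G(\L\xi,\eta)-G(\xi,\L\eta)=\varphi\big(g(\j\xi,\j\eta)+g(\V\xi,\V\eta)\big)=\varphi\,G(\xi,\eta),
\]
which is the asserted conformality of $X\tl$ on $(\mathring TM,G)$, with the same $\varphi$.

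For the converse, suppose $\L G=\psi\,G$ for some function $\psi$ on $\mathring TM$. Restricting the identity $(\L G)(\xi,\eta)=\psi\,G(\xi,\eta)$ to vertical arguments $\xi=\i\X$, $\eta=\i\Y$ kills the horizontal ($\j$) part of $G$, leaving $g(\V\L\,\i\X,\ldots)$-type expressions; alternatively, and more cleanly, one evaluates on arguments of the form $\J\xi$, using $\j\J\xi=0$ and $\V\J\xi=\j\xi$, so that $G(\J\xi,\J\eta)=g(\j\xi,\j\eta)$ and $G$ restricted to the image of $\J$ reproduces $g$. Then a short computation with $\L\J\xi=\J\L\xi$ (already used in the proof of Lemma following \eqref{lxg}) shows $(\L G)(\J\xi,\J\eta)=(\La g)(\j\xi,\j\eta)$, whence $\La g=\psi\!\restriction\!(\text{fibrewise})\cdot g$; by Lemma~\ref{verticallift} the factor is forced to be a vertical lift, and $X$ is conformal on $(M,F)$ by definition.

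I expect the main obstacle to be bookkeeping rather than conceptual: one must be careful that $\L=\L_{X\tl}$ acting on a scalar of the form $g(\j\xi,\j\eta)$ really does obey the Leibniz rule that mixes $\L$ (on $TM$) with $\La$ (along $\pi$), which is legitimate precisely because $X\tl$ is projectable and $\La_{X\tl}$ is defined on $\Sec(\pi)$, and that \eqref{felcs} and Proposition~\ref{affine}(vii) are being applied to vector fields on $\mathring TM$ (the versions "along $\mathring\pi$", valid by the closing remark of Section~2). The genuinely load-bearing input is the equivalence (i)$\iff$(vii) of Proposition~\ref{affine}: without $\La\circ\V=\V\circ\L$ there is no way to handle the vertical block of $G$, and this is the only place the affine hypothesis is used.
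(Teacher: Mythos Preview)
Your forward direction is essentially identical to the paper's: both expand $(\L G)(\xi,\eta)$ via the Leibniz rule, split according to $G=g(\j\cdot,\j\cdot)+g(\V\cdot,\V\cdot)$, and use \eqref{felcs} for the $\j$-block together with Proposition~\ref{affine}(vii) (i.e.\ $\La\circ\V=\V\circ\L$) for the $\V$-block, reducing to $(\La g)(\j\xi,\j\eta)+(\La g)(\V\xi,\V\eta)=\varphi\,G(\xi,\eta)$.

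For the converse you take a slightly different route. The paper does not restrict to general vertical arguments; it simply evaluates $(\L G)(C,C)$, uses $[X\tl,C]=0$ and $\V C=\delta$, $\j C=0$ to obtain $2\varphi E=\varphi\,G(C,C)=X\tl G(C,C)=X\tl g(\delta,\delta)=2X\tl E$, and then invokes criterion~(ii) of Proposition~\ref{conformal}. Your argument---plugging in $\i\X,\i\Y$ (equivalently $\J\xi,\J\eta$) and using $\j\circ\i=0$, $\V\circ\i=1$, together with the fact that $[X\tl,\i\X]$ is vertical with $\V[X\tl,\i\X]=\La\X$---yields $(\L G)(\i\X,\i\Y)=(\La g)(\X,\Y)$ directly, hence $\La g=\psi\,g$. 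Both are correct; the paper's computation is a one-line specialisation that lands immediately on the energy-function characterization, while yours recovers the full relation $\La g=\psi g$ from the definition and is perhaps more transparent about why no affineness hypothesis is needed in this direction.
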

\begin{proof}
 Suppose first that $X$ is both an affine and a conformal vector field on $(M,F)$.
Applying \eqref{sasaki}, \eqref{felcs} and Proposition \ref{affine}/$(vii)$,
for any vector fields $\xi,\eta$ on $\overset\circ TM$ we have
 \begin{eqnarray*}
  (\mathcal{L}_{X\tl}G)(\xi,\eta)
  &=&\mathcal{L}_{X\tl}(G(\xi,\eta))
  -G(\mathcal{L}_{X\tl}\xi,\eta)
  -G(\xi,\mathcal{L}_{X\tl}\eta)
  =\mathcal{L}_{X\tl}(g(\mathbf{j}\xi,\mathbf{j}\eta))
  \\
 &+&\mathcal{L}_{X\tl}(g(\mathcal{V}\xi,\mathcal{V}\eta))
-g(\mathbf{j}\mathcal{L}_{X\tl}\xi,\mathbf{j}\eta)
-g(\mathcal{V}\mathcal{L}_{X\tl}\xi,\mathcal{V}\eta)
\\
&-&g(\mathbf{j}\xi,\mathbf{j}\mathcal{L}_{X\tl}\eta)
-g(\mathcal{V}\xi,\mathcal{V}\mathcal{L}_{X\tl}\eta)=
\widetilde{\mathcal{L}}_{X\tl}(g(\mathbf{j}\xi,\mathbf{j}\eta))
\\
&+&\widetilde{\mathcal{L}}_{X\tl}(g(\mathcal{V}\xi,\mathcal{V}\eta))
-g(\widetilde{\mathcal{L}}_{X\tl}(\mathbf{j}\xi),\mathbf{j}\eta)
-g(\widetilde{\mathcal{L}}_{X\tl}(\mathcal{V}\xi),\mathcal{V}\eta)
\\
&-&g(\mathbf{j}\xi,\widetilde{\mathcal{L}}_{X\tl}(\mathbf{j}\eta))
-g(\mathcal{V}\xi,\widetilde{\mathcal{L}}_{X\tl}(\mathcal{V}\eta))
=(\widetilde{\mathcal{L}}_{X\tl}g)(\mathbf{j}\xi,\mathbf{j}\eta)
\\
&+&(\widetilde{\mathcal{L}}_{X\tl}g)(\mathcal{V}\xi,\mathcal{V}\eta)
=\varphi g(\mathbf{j}\xi,\mathbf{j}\eta)+ \varphi g(\mathcal{V}\xi,\mathcal{V}\eta)
=\varphi G(\xi,\eta).
\end{eqnarray*}
This proves that $X\tl$ is a conformal vector field on $(\overset\circ TM,G)$.
Conversely, under this condition we find that
\begin{eqnarray*}
  2\varphi\, E &=& \varphi\, g(\delta,\delta) = \varphi\, g (\mathcal{V}C,\mathcal{V}C)=\varphi\, G(C,C)=(\mathcal{L}_{X\tl}G)(C,C)
  \\
  &=&X\tl (G(C,C))-
G([X\tl ,C],C)-G(C,[X\tl ,C])=X\tl (G(C,C))
\\
&=&X\tl g(\delta,\delta)
=2X\tl E,
 \end{eqnarray*}
so, by Proposition \ref{conformal}, $X$ is a conformal vector field on $(M,F)$.
\end{proof}
\begin{all}\label{homaff}
Any homothetic vector field on a Finsler manifold is an affine vector field.
\end{all}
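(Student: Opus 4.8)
The plan is to argue directly from the equation that characterises the canonical spray $S$ of $(M,F)$, namely $i_S\,dd_\J E=-dE$ over $\overset\circ TM$, and to show that $\L S=[X\tl,S]=0$; by definition this is precisely the assertion that $X$ is a Lie symmetry of $S$, i.e.\ an affine vector field of $(M,F)$. Nothing beyond elementary Cartan calculus and the facts established earlier in this section is needed.

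First I would record the content of homotheticity: if $X$ is homothetic, then by Corollary~\ref{homothetic} there is a constant $\alpha\in\R$ with $X\tl E=\alpha\,E$ and $\L\omega=\alpha\,\omega$, where $\omega=dd_\J E$. Then I would apply $\L$ to both sides of $i_S\omega=-dE$, using the commutation rule $\L\circ i_S-i_S\circ\L=i_{[X\tl,S]}$. On the left-hand side, $\L(-dE)=-d(\L E)=-d(X\tl E)=-\alpha\,dE$, where the constancy of $\alpha$ is exactly what makes $d(\alpha E)=\alpha\,dE$. On the right-hand side, $i_{[X\tl,S]}\omega+i_S(\L\omega)=i_{[X\tl,S]}\omega+\alpha\,i_S\omega=i_{[X\tl,S]}\omega-\alpha\,dE$. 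Comparing the two expressions yields $i_{[X\tl,S]}\,\omega=0$ on $\overset\circ TM$, and since $\omega$ is non-degenerate, this forces $[X\tl,S]=0$ on $\overset\circ TM$.

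It then remains to pass from $\overset\circ TM$ to all of $TM$. The vector field $[X\tl,S]=\L S$ is continuous on $TM$, since $X\tl$ is smooth and the canonical spray $S$ is of class $C^1$; as it vanishes on the dense open subset $\overset\circ TM$, it vanishes identically. Hence $[X\tl,S]=0$ on $TM$, so $X$ is an affine vector field of $(M,F)$.

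The core computation is only a two-line manipulation; the single point that deserves a little care is the final continuity-and-density step, which relies on the canonical spray being merely $C^1$ on $TM$ (and vanishing on the zero section). It is also worth pointing out why the argument collapses for a general conformal field: there $X\tl E=\varphi\,E$ with $\varphi$ no longer constant, so $\L(-dE)=-d(\varphi E)$ acquires the extra term $-E\,d\varphi$, and this surplus is precisely what shows up as the term $d\varphi\wedge d_\J E$ in Proposition~\ref{conformal}(v); it is the obstruction preventing $[X\tl,S]$ from vanishing.
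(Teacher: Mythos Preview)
Your argument is correct and is essentially the same as the paper's: both apply $\L$ to the defining relation $i_S\omega=-dE$, use the identity $[\L,i_S]=i_{[X\tl,S]}$ together with $\L\omega=\alpha\,\omega$ and $X\tl E=\alpha E$, and conclude $i_{[X\tl,S]}\omega=0$, hence $[X\tl,S]=0$ by the non-degeneracy of $\omega$. Your added continuity-and-density remark extending the vanishing of $[X\tl,S]$ across the zero section is a point the paper passes over silently.
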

\begin{proof}
Let $(M,F)$ be a Finsler manifold, and let $S$ be the canonical spray for $(M,F)$.
Suppose that $X$ is a homothetic vector field of $(M,F)$.
Then, by Corollary \ref{homothetic}, there is a real number $\alpha$ such that $X\tl E=\alpha\, E$,
or, equivalently, $\L\,\omega=\alpha\,\omega$, so we have
\begin{eqnarray*}
\L d E&=&d(X\tl E)=\alpha\, d E
\stackrel{\eqref{spray}}{=}-\alpha\, i_S\, \omega
=-i_S(\alpha\,\omega)
=-i_S\,(\L\omega)
\\
&=&-\L i_S\,\omega+i_{[X\tl,S]}\omega
=\L dE+i_{[X\tl,S]}\omega.
\end{eqnarray*}
Thus $i_{[X\tl,S]}\omega=0$, and hence -- by the non-degeneracy of $\omega$ --
$[X\tl,S]=0$. This means that $X$ is a Lie symmetry of the canonical spray of $(M,F)$.
\end{proof}
\begin{lem}\label{divxc}
If $X$ is a conformal vector field on an $n$-dimensional Finsler manifold,
then (with respect to the Dazord volume form)
$
\dvr X\tl=n \, \varphi,
$
where $\varphi$ is the conformal factor of $X$.
\end{lem}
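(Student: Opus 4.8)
The plan is to reduce $\l_{X\tl}\sigma$ to a computation with $\l_{X\tl}\omega$ and then invoke characterization $(v)$ of Proposition~\ref{conformal}. Since $\sigma$ is, up to the nonzero scalar $\tfrac{(-1)^{n(n-1)/2}}{n!}$, the top power $\omega^n=\omega\wedge\dots\wedge\omega$ ($n$ factors) on the $2n$-dimensional manifold $\overset\circ TM$, and $\l_{X\tl}$ is a derivation of the exterior algebra while $\omega$ has even degree, I would first record
\[
\l_{X\tl}\omega^n=n\,(\l_{X\tl}\omega)\wedge\omega^{n-1}.
\]
By Proposition~\ref{conformal}, $(i)\iff(v)$, a conformal vector field $X$ with conformal factor $\varphi$ satisfies $\l_{X\tl}\omega=\varphi\,\omega+d\varphi\wedge d_\J E$ with $\varphi=f\circ\tau$, $f\in C^\infty(M)$; substituting,
\[
\l_{X\tl}\omega^n=n\varphi\,\omega^n+n\,d\varphi\wedge d_\J E\wedge\omega^{n-1}.
\]
It then suffices to show that the second term vanishes, for then $\l_{X\tl}\omega^n=n\varphi\,\omega^n$, and multiplying by the scalar factor yields $\l_{X\tl}\sigma=n\varphi\,\sigma$, i.e.\ $\dvr X\tl=n\varphi$.

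The heart of the argument is the vanishing of $d\varphi\wedge d_\J E\wedge\omega^{n-1}$, and the observation I would use is that the Hilbert $1$-form $\theta=d_\J E$ is obtained from $\omega$ by contraction with the Liouville vector field, $i_C\omega=\theta$. This I would justify either from \eqref{omegag} by writing $C=\J S$ for a spray $S$ (so that $\j S=\delta$ and $\omega(C,\eta)=\omega(\J S,\eta)=g(\j S,\j\eta)=g(\delta,\j\eta)=\widetilde\theta(\j\eta)\overset{\eqref{theta}}{=}\theta(\eta)$), or via Cartan's formula together with $i_C d_\J E=dE(\J C)=0$ (because $\J C=0$) and $\l_C d_\J E=d_\J E$ (which is \eqref{dJLC} evaluated at $E$, using $CE=2E$). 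Granting $i_C\omega=\theta$, we get $d_\J E\wedge\omega^{n-1}=i_C\omega\wedge\omega^{n-1}=\tfrac1n\,i_C\omega^n$, and since $\omega^n$ is a form of top degree,
\[
d\varphi\wedge d_\J E\wedge\omega^{n-1}=\tfrac1n\,d\varphi\wedge i_C\omega^n=\tfrac1n\,(i_C\,d\varphi)\,\omega^n=\tfrac1n\,(C\varphi)\,\omega^n=0,
\]
the last equality because $\varphi=f\circ\tau$ is fibrewise constant, hence annihilated by the vertical field $C$ — exactly the homogeneity property of $\varphi$ already exploited in Lemma~\ref{verticallift}.

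The only genuine obstacle is this last vanishing; the rest is bookkeeping with derivations of $\Omega(\overset\circ TM)$. I expect the identity $i_C\omega=\theta$ to be the step that needs to be stated most cleanly, since it is what collapses the awkward triple wedge. No special care is needed near $o(M)$: $\varphi$ is the vertical lift of a smooth function on $M$, while $\omega$ and $\sigma$ are defined on $\overset\circ TM$ and the whole computation takes place there.
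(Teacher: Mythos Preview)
Your argument is correct. The derivation identity $\l_{X\tl}\omega^n=n\,(\l_{X\tl}\omega)\wedge\omega^{n-1}$, the substitution of Proposition~\ref{conformal}$(v)$, the contraction identity $i_C\omega=\theta$ (both of your justifications are valid in this paper's framework), and the collapse $d\varphi\wedge i_C\omega^n=(C\varphi)\,\omega^n=0$ via $\varphi=f\vl$ are all sound.

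The paper takes a different route: instead of working intrinsically with wedge products and the contraction $i_C\omega=\theta$, it fixes a local frame $(X_i)_{i=1}^n$ for $TM$, lifts it to the frame $(X_i\vl,X_i\tl)_{i=1}^n$ for $TTM$, and then verifies by a ``little lengthy inductive argument'' (left to the reader) that $(\l_{X\tl}\omega^n)(X_1\vl,X_1\tl,\dots,X_n\vl,X_n\tl)=n\varphi\,\omega^n(X_1\vl,X_1\tl,\dots,X_n\vl,X_n\tl)$. In effect the paper is doing the same cancellation you do, but by evaluating on a concrete basis and tracking the combinatorics of the expansion; the vanishing of the $d\varphi\wedge d_\J E$ contribution is hidden inside that induction. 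Your version is frame-free, isolates the key identity $i_C\omega=\theta$ explicitly, and makes transparent \emph{why} the extra term disappears (namely $C\varphi=0$), at the cost of invoking Proposition~\ref{conformal}$(v)$ rather than working directly from the definition; the paper's version is more hands-on but leaves the actual computation as an exercise.
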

\begin{proof}
Choose a local frame $(X_i)_{i=1}^n$ for $TM$ over an open subset $U$ of $M$.
Then the family $(X_i\vl,X_i\tl)_{i=1}^n$
is a local frame for $TTM$ over $\tau^{-1}(U)$.
It may be shown by a little lengthy inductive argument that
\[
 (\L\omega)(X_1\vl,X_1\tl,...,X_n\vl,X_n\tl)=n\, \varphi\, \omega (X_1\vl,X_1\tl,...,X_n\vl,X_n\tl),
\]
which implies our claim.
\end{proof}
\begin{all}\label{projconf}
If a vector field is both a projective and a conformal vector field
on a Finsler manifold, then it is a homothetic vector field.
\end{all}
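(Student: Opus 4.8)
The plan is to combine the projective relation with characterization $(v)$ of Proposition \ref{conformal} and then to squeeze the resulting identity pointwise. Write $[X\tl,S]=\psi\,C$, where $S$ is the canonical spray of $(M,F)$ and $\psi\in C^0(TM)\cap C^1(\overset\circ TM)$ is the projective factor of $X$, and let $\varphi$ denote the conformal factor of $X$. By Lemmas \ref{conform} and \ref{verticallift} we already have $X\tl E=\varphi\,E$ and $\varphi=f\circ\tau$ with $f\in C^\infty(M)$, so $\varphi$ is in fact smooth on all of $TM$. It suffices to prove that $f$ is constant: then $\varphi$ is a constant function and $X$ is homothetic by definition.

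The first step is to apply the substitution operator $i_S$ to both sides of $\L\omega=\varphi\,\omega+d\varphi\wedge d_\J E$ (Proposition \ref{conformal}$(v)$; here $\omega=dd_\J E$, and I abbreviate $\theta:=d_\J E$), using the commutation rule $\L\circ i_S-i_S\circ\L=i_{[X\tl,S]}=\psi\,i_C$ (as in the proof of Proposition \ref{homaff}). The auxiliary facts needed are: $i_S\omega=-dE$ by \eqref{spray}; the identity $i_C\omega=\theta$, which follows from $\l_C\theta=\theta$ (an immediate consequence of \eqref{dJLC}) together with \eqref{magic} and $\theta(C)=dE(\J C)=0$; and the evaluations $\theta(S)=dE(\J S)=CE=2E$ (using \eqref{S2}) and $d\varphi(S)=S(f\circ\tau)=f\tl$ (because $\tau_*\circ S=1_{TM}$). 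Applying $i_S$ to the left-hand side gives $i_S(\L\omega)=\L(i_S\omega)-\psi\,i_C\omega=-d(X\tl E)-\psi\,\theta=-d(\varphi E)-\psi\,\theta$, while applying $i_S$ to the right-hand side gives $\varphi\,i_S\omega+(d\varphi(S))\,\theta-(\theta(S))\,d\varphi=-\varphi\,dE+f\tl\,\theta-2E\,d\varphi$; after cancelling the common term $-\varphi\,dE$ one is left with the identity $E\,d\varphi=(f\tl+\psi)\,\theta$ on $\overset\circ TM$. Evaluating this on $S$ and using $\theta(S)=2E$, $d\varphi(S)=f\tl$ forces $\psi=-\tfrac12 f\tl$ on $\overset\circ TM$, and feeding this back leaves the key relation $2E\,d\varphi=f\tl\,\theta$.

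The second and final step is to test $2E\,d\varphi=f\tl\,\theta$ against the complete lift $Y\tl$ of an arbitrary vector field $Y$ on $M$. Since $Y\tl\varphi=Y\tl(f\circ\tau)=(Yf)\vl$ by \eqref{Xcfv}, and $\theta(Y\tl)=dE(\J Y\tl)=Y\vl E=g(\kal Y,\delta)$ — using $\J Y\tl=Y\vl$ (from \eqref{jx}, \eqref{ix}) together with $\widetilde\theta(\widetilde X)=g(\widetilde X,\delta)$ and $\theta=\widetilde\theta\circ\j$ — the relation becomes $2E\,(Yf)\vl=f\tl\,g(\kal Y,\delta)$. Reading this at a point $v\in\overset\circ TM$ over $p:=\tau(v)$, and regarding $v$ as an element of $T_pM$, it says (as $Y_p$ ranges over all of $T_pM$) that $2E(v)\,df_p(w)=df_p(v)\,g_v(w,v)$ for every $w\in T_pM$. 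Now suppose $df_p\neq0$; since $n=\dim M\geq2$, the hyperplane $\ker df_p\subset T_pM$ contains a nonzero vector $v_0$, and the choice $v:=v_0$ yields $2E(v_0)\,df_p=0$, which is absurd because $E(v_0)=\tfrac12F(v_0)^2>0$. Hence $df$ vanishes at every point of $M$, so $f$ is constant ($M$ being connected), $\varphi$ is a constant, and $X$ is homothetic. The only genuinely non-routine move is the first one — routing the projective datum through the $i_S$-contraction of part $(v)$ — since the summand $d\varphi\wedge d_\J E$ in $(v)$ is precisely what carries information about $d\varphi$, whereas the analogous contractions of parts $(ii)$ or $(iii)$ yield only tautologies; once $2E\,d\varphi=f\tl\,\theta$ is in hand the remainder is a short pointwise argument.
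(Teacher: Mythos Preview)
Your proof is correct, but it takes a genuinely different route from the paper's.

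Both arguments reach the intermediate identity $\psi=-\tfrac12 f\tl$; the paper obtains it in one line by applying $[X\tl,S]$ to $E$ (using $SE=0$ and $Sf\vl=f\tl$), while you obtain it by contracting Proposition~\ref{conformal}$(v)$ with $i_S$ and evaluating on $S$. The real divergence between the two proofs is in the endgame. The paper now takes the divergence (with respect to the Dazord volume form) of $[X\tl,S]=-\tfrac12 f\tl\,C$ and invokes Lemmas~\ref{divc}, \ref{divs} and \ref{divxc} to get $(n-1)f\tl=0$. You instead extract from the $i_S$-contraction the $1$-form identity $2E\,d\varphi=f\tl\,\theta$, test it on complete lifts, and finish by a direct pointwise linear-algebra argument in each $T_pM$ (choosing $v_0\in\ker df_p$ when $n\ge 2$). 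Your approach is more elementary in that it bypasses the volume-form machinery entirely; the paper's approach, on the other hand, fits into the divergence framework that is reused in Proposition~\ref{vol}, so its auxiliary lemmas are not wasted effort.
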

\begin{proof}
Let $(M, F)$ be an $n$-dimensional Finsler manifold. Suppose that a vector field $X$ on $M$ is both projective and conformal. Then, on the one hand,
\[
 [X\tl,S]=\psi\,C,\qquad \psi\in C^0(TM)\cap C^1(TM),
\]
where $S$ is the canonical spray of $(M,F)$. On the other hand, by \mbox{Proposition \ref{conformal}},
\[
X\tl E=f\vl E, \qquad  f\in C^{\infty}(M).
\]
Thus we get
\begin{eqnarray*}
2\psi E&=&\psi(CE)=[X\tl ,S]E=X\tl(SE)-S(X\tl E)=-S(f\vl E)
\\
&=&-(Sf\vl )E-f\vl(SE)
=-f\tl E,
\end{eqnarray*}
taking into account that $S$ is horizontal with respect to the canonical connection of $(M,F)$ and hence $SE=\frac{1}{2}SF^2=F(SF)=0$ (see \textbf{\ref{finsler}}),
applying furthermore the relation $Sf\vl=f\tl\ (f\in C^{\infty}(M))$, whose verification is routine. It follows that
\[
\psi=-\frac{1}{2} f\tl .
\]

Now we determine the divergence (with respect to the Dazord volume form) of both sides of the relation $[X\tl,S]= -\frac{1}{2}f\tl\, C$.
Applying the well-known rules for calculation (see, e.g., \cite{abraham}, \S 6.5 or \cite{lang}, XV,\S 1) we find that
\[
 \dvr [X\tl,S]=X\tl \dvr S - S \dvr X\tl \stackrel{\mbox{\begin{footnotesize}Lemmas\end{footnotesize}}\ \ref{divs},\,\ref{divxc}}{=} -S(nf\vl)
=-n f\tl
\]
and
\[
\dvr (-\frac{1}{2}f\tl C )=-\frac{1}{2}(Cf\tl+f\tl\dvr C)\stackrel{\mbox{\begin{footnotesize}Lemma\end{footnotesize}}\ \ref{divc}}{=}
-\frac{1}{2}(n+1)f\tl.
\]
So $(n-1)f\tl=0$, where $n\geq 2$ (\textbf{\ref{manifold}} (a)), whence $f\tl=0$. This implies by the connectedness of $M$ that $f$ is a constant function,
and therefore the conformal factor of $X$ is constant.
\end{proof}
We note that this result is an infinitesimal version of  Theorem 2 in \cite {szilasivince}.
\begin{all}\label{vol}
 Let (M,F) be a Finsler manifold. Suppose that a vector field $X$ on $M$ preserves the Dazord volume form of $(M,F)$, i.e., $\L\sigma=0$. If, in addition,
\begin{itemize}
 \item [(i)]  $X$ is a projective vector field, then $X$ is affine;
\item [(ii)] $X$ is a conformal vector field, then $X$ is isometric.
\end{itemize}
\end{all}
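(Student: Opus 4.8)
The plan is to reduce both assertions to divergence computations with respect to the Dazord volume form $\sigma$, exploiting that the hypothesis $\L\sigma=0$ says precisely $\dvr X\tl=0$.

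Part (ii) then follows at once: by Lemma \ref{divxc} one has $\dvr X\tl=n\varphi$, where $\varphi$ is the conformal factor of $X$; combined with $\dvr X\tl=0$ this forces $n\varphi=0$, hence $\varphi=0$, which is exactly the statement that $X$ is a Killing vector field of $(M,F)$.

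For part (i) I would write the projectivity condition as $[X\tl,S]=\varphi\,C$, where $S$ is the canonical spray of $(M,F)$ and $\varphi\in C^0(TM)\cap C^1(\overset\circ TM)$; the goal is to prove $\varphi=0$. First I would observe that $\varphi$ is positive-homogeneous of degree $1$ over $\overset\circ TM$, i.e.\ $C\varphi=\varphi$: since $[C,X\tl]=0$ by (\lieC b) and $[C,S]=S$ by \eqref{S3}, the Jacobi identity gives $[C,[X\tl,S]]=[X\tl,S]$, while directly $[C,\varphi\,C]=(C\varphi)\,C$; comparing the two and using that $C$ is nowhere vanishing on $\overset\circ TM$ yields $C\varphi=\varphi$. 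Then I would take the divergence of both sides of $[X\tl,S]=\varphi\,C$. By the rule $\dvr[\xi,\eta]=\xi\,\dvr\eta-\eta\,\dvr\xi$, together with Lemma \ref{divs} ($\dvr S=0$) and the hypothesis ($\dvr X\tl=0$), the left-hand side is $0$; on the right, $\dvr(\varphi\,C)=C\varphi+\varphi\,\dvr C=\varphi+n\varphi$ by $C\varphi=\varphi$ and Lemma \ref{divc} ($\dvr C=n$). Hence $(n+1)\varphi=0$, so $\varphi=0$, i.e.\ $[X\tl,S]=0$ and $X$ is an affine vector field of $(M,F)$.

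There is no deep obstacle: the statement is essentially a harvest of the divergence identities for $C$, $S$ and $X\tl$ (Lemmas \ref{divc}, \ref{divs}, \ref{divxc}) and of Proposition \ref{conformal}. The one point needing care is the homogeneity $C\varphi=\varphi$ of the projective factor in part (i) — without it the divergence of $\varphi\,C$ cannot be pinned down and the conclusion would not follow — and one should bear in mind that all these manipulations take place over the deleted bundle $\overset\circ TM$, the resulting identities being transferred to $TM$ by continuity where that matters.
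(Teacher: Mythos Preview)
Your proposal is correct and follows essentially the same route as the paper: both parts are reduced to the vanishing of $\dvr X\tl$, part (ii) via Lemma \ref{divxc}, and part (i) by first deriving $C\psi=\psi$ from the Jacobi identity and then taking divergences of $[X\tl,S]=\psi\,C$ using $\dvr S=0$ and $\dvr C=n$. The only cosmetic difference is that the paper phrases part (ii) through $X\tl E=f\vl E$ and Corollary \ref{homothetic}, but this is the same computation.
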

\begin{proof}
 First we note that our condition $\L\sigma=0$ implies that $\dvr X\tl=0$.
 \ujsor$(i)$ Suppose that $X$ is also a projective vector field, i.e.,
\[
 [X\tl,S]=\psi\,C, \qquad \psi\in C^0(TM)\cap C^1(\overset\circ TM).
\]
Observe that over $\overset\circ TM$ the function $\psi$ satisfies the relation $C\,\psi=\psi$. Indeed, by the Jacobi identity
\[
 0=[C,[X\tl,S]]+[X\tl,[S,C]]+[S,[C,X\tl]]=[C,[X\tl,S]]-[X\tl,S],
\]
hence
\[
 [X\tl,S]=[C,[X\tl,S]]=[C,\psi\,C]=(C\psi)C,
\]
therefore $(C\psi)C=\psi\,C$, and so $C\,\psi=\psi$.

Now, as in the previous proof, we calculate the divergence of both sides of the relation $[X\tl,S]=\psi\,C$. Since
$\dvr X\tl=\dvr S=0$, we have
\[
 \dvr[X \tl ,S]=X\tl \dvr S -S \dvr X\tl =0.
\]
On the other hand, by our above remark,
\[
 \dvr (\psi\,C)=\psi \dvr C+C\,\psi=(n+1)\psi.
\]
So it follows that $\psi=0$, hence $[X\tl,S]=0$. Thus $X$ is an affine vector field on $(M,F)$.
\ujsor $(ii)$ Now suppose that ($\dvr X\tl=0$ and) $X$ is also a conformal vector field. Then, by Proposition \ref{conformal},
$X\tl E=f\vl E,\ f\in C^{\infty}(M)$. Since
\[
 n\,f\vl\stackrel{\mbox{\begin{footnotesize}Lemma \end{footnotesize}}\ref{divxc}}{=}\dvr X\tl\stackrel{\mbox{\begin{footnotesize}cond.\end{footnotesize}}}{=}0,
\]
 it follows that $X\tl E=0.$ Thus, by Corollary \ref{homothetic}, $X$ is an isometric vector field on $(M,F)$.
\end{proof}
\section*{Acknowledgements}
This research was carried out in the framework of the Cooperation of the Czech and Hungarian Government (\dots).
The first author was supported also by Hungarian
Scientific Research Fund OTKA No.\ NK 81402. The authors wish to express their gratitude to Bernadett Aradi, Dávid Cs. Kertész and Rezs\oo o L. Lovas for their useful comments and
technical help during the preparation of the manuscript.

\vspace{2 cm}
József Szilasi 
\\
Institute of Mathematics, University of Debrecen
\\
H-4010 Debrecen, P.\ O.\ Box 12, Hungary
\\
\emph{E-mail: }szilasi@math.science.unideb.hu
\vspace{1 cm}
\\
Anna Tóth
\\
Institute of Mathematics, University of Debrecen
\\
H-4010 Debrecen, P.\ O.\ Box 12, Hungary
\\
\emph{E-mail: }tothanna@math.science.unideb.hu
\end{document}